
\documentclass[]{interact}
\usepackage{tikz}
\usetikzlibrary{calc}
\usetikzlibrary{shapes.geometric, arrows}
\usepackage{cite}
\usepackage{hyperref}
\hypersetup{
	colorlinks=true,
	linkcolor=blue,
	urlcolor=blue,
	citecolor=blue
}
\usepackage{epstopdf}
\usepackage[caption=false]{subfig}

\usepackage[longnamesfirst,sort]{natbib}
\bibpunct[, ]{(}{)}{;}{a}{,}{,}


\theoremstyle{plain}
\newtheorem{theorem}{Theorem}[section]

\newtheorem{proposition}[theorem]{Proposition}

\theoremstyle{definition}
\newtheorem{definition}[theorem]{Definition}

\theoremstyle{remark}

\usepackage[percent]{overpic} 

\begin{document}

\articletype{Research Article}

\title{Regional Control Strategies for a Spatiotemporal SQEIAR Epidemic Model: Application to COVID-19}

\author{\name{ Mohammed Elghandouri \textsuperscript{a,b,c}\thanks{CONTACT M. Elghandouri Email: medelghandouri@gmail.com},	Khalil Ezzinbi\textsuperscript{b,c} and Youness Mezzan\textsuperscript{b}}\affil{
		\textsuperscript{a} Centre INRIA de Lyon, CEI-2 56, Boulevard Niels Bohr, 69 603, Villeurbanne, France.\\ 
		\textsuperscript{b} Cadi Ayyad University, Faculty of Sciences Semlalia, B.P: 2390, Marrakesh, Morocco.\\ \textsuperscript{c} IRD-UMMISCO, 32 Av Henri Varagnat, 93 143 Bondy, France.}
}

\maketitle

\begin{abstract}
In this work, we develop a spatial SEIAR-type epidemic model considering a quarantined population (denoted as Q), which we call the SQEIAR model. The dynamics of the SQEIAR model are described by six Partial Differential Equations (PDEs) that represent the changes in the susceptible, quarantined, exposed, asymptomatic, infected, and recovered populations. Our goal is to reduce the number of susceptible, exposed, asymptomatic, and infected individuals while accounting for the environment, which plays a critical role in the spread of epidemics. We then propose a novel strategy for epidemic control, incorporating two key control measures: regional quarantine for the susceptible population and treatment for the infected. This approach serves as an alternative to widespread quarantine, minimizing the economic, social, and other potential impacts. Additionally, we consider the possibility of reinfection among recovered individuals, a common occurrence in many diseases. To demonstrate the practical utility of our results, a numerical example centered on COVID-19 is presented.
\end{abstract}

\begin{keywords}
	Infectious Diseases; Control; PDEs; Epidemic Models; SEIAR Models; COVID-19.
\end{keywords}

\begin{amscode}
	35A01; 49J20, 93C10.
\end{amscode}

\section{Introduction} 	\label{section 1}
\noindent

The novel human corona-virus disease 2019 (COVID-19) was first reported in Wuhan, China, in 2019. By September 2021, almost two years after COVID-19 was first identified, there had been more than 200 million confirmed cases and over 4.6 million lives lost to the disease. Moreover, COVID-19 is one of the world's biggest problems due to its economic, social, and political repercussions and crises, see for instance \cite{16,17,18,19,20,21} and the references therein. However, when a disease first appears, and in the absence of effective medications or vaccines, quarantine and isolation strategies with some treatment for the infected individuals are the only ways to control its spread. Quarantine, a restriction placed on the movement of populations, animals, and goods to prevent the spread of diseases, is a natural strategy that has been used for thousands of years. It was used to control the spread of the Black Death in 1793, the Great Influenza in 1918, the Ebola epidemic in 2014, and recently COVID-19 in 2020, (see for instance, \cite{22,23,24,25,26,27}).

In this context, various mathematical models have been proposed to better understand and control the spread of COVID-19, each incorporating different strategies such as quarantine and vaccination. For example, in \cite{29}, a model was introduced that considered both susceptible and symptomatic infectious individuals, addressing positivity, optimal control, and stability analysis. Further, in \cite{30}, a non-autonomous nonlinear deterministic model was proposed to explore COVID-19 control through social distancing, surface cleaning, precautionary measures for exposed individuals, and fumigation of public spaces. Other models, like the SIR model presented by \cite{Libotte}, focused on optimal vaccination strategies to minimize infection rates. Similarly, \cite{Shen} proposed an optimal control model that included prevention, vaccination, and quick screening. In \cite{Elghandouri}, a VS-EIAR epidemiological model was developed to analyze the dynamics of COVID-19, focusing on minimizing susceptible, exposed, infected, and asymptomatic individuals through vaccination and treatment. These and other studies, including \cite{Kohler, Morato, Carli, Peni, Scarabaggio, Aquino, Davies, Guner, Moghadas, Jin, Pellis, Iwami, Okyer, Rachah}, underline the critical role of mathematical modeling in developing effective strategies to combat pandemics.

Building on these foundational works, the authors in \cite{28} recently proposed an impulsive ordinary SQEIAR epidemic model to control the spread of COVID-19. While their model utilized Pontryagin’s Maximum Principle (PMP) to construct an optimal control, it did not account for spatial factors or the possibility of reinfection among recovered individuals-both critical in disease transmission. To address these limitations, we introduce a spatial SEIAR epidemic model that incorporates these missing elements, offering a more comprehensive and realistic approach to epidemic control. Our model not only considers the susceptible, exposed, asymptomatic, infected, and recovered populations but also adds a quarantined group to better capture the dynamics of disease spread. We aim to reduce the numbers of susceptible, exposed, asymptomatic, and infected individuals while increasing the quarantined and recovered populations. To achieve this, we propose two control strategies: quarantine for susceptible individuals and treatment for the infected. Recognizing the high cost of quarantining an entire population, we introduce a new strategy that applies quarantine selectively to specific regions, thereby minimizing the economic, social, and other potential impacts. This strategy also takes into account environmental factors that significantly influence disease transmission, and we call it \emph{Regional Quarantine}. We complement this strategy with treatment measures for infected individuals to save lives and help more people recover from the infection. This spatially aware approach, combined with our focus on reinfection risks, provides a more robust framework for controlling the spread of infectious diseases.

Then, a set of Partial Differential Equation (PDEs) with two control functions is proposed (Equation \eqref{eq 1}). As we will see, due to the presence of the nonlinear term, this equation is not well-defined in the space \(\mathbb{L}^{2}(\Omega)\), where \(\Omega\) represents the domain where the disease emerges. This issue poses a challenge when addressing the existence of solutions for our system. To overcome this problem, we employ the technique of truncation functions. This approach allows us to modify the nonlinear term so that it remains within the bounds of the \(\mathbb{L}^{2}(\Omega)\) space, ensuring that the equation is well-defined. Consequently, we can apply established theorems to prove the existence of a solution. In particular, we rely on the results of the following theorem:

\begin{theorem}\cite[Theorem 1.4]{pazy}
	Let $F:\mathbb{R}^{+}\times X\to X$ be locally integrable w.r.t the first argument and locally Lipschitz continuous w.r.t the second argument. If $B$ is the infinitesimal generator of a $C_0$-semigroup $(T(t))_{t\geq 0}$ on $X$, then for every $y_0\in X$, there exists a $t_{max}\leq +\infty$ such that the following initial value Cauchy problem:
	\begin{equation*}
		\left\{\begin{array}{l}
			y'(t)=B y(t)+F(t,y(t)), \quad t\geq 0\\[5pt]
			y(0)=y_0,
		\end{array}\right.
		\label{Eq 8}
	\end{equation*}
	has a unique (mild) solution $y$ on $[0,t_{max})$. Moreover, if $t_{max}<+\infty$, then $\limsup\limits_{t\to t_{max}}\Vert y(t)\Vert_{X}=+\infty$.
	\label{pazy_thm}
\end{theorem}

The paper is structured as follows: in Section \ref{section 2}, we introduce our model. Section \ref{section 3} is dedicated to the examination of the existence of solutions for the proposed model. Following that, in Section \ref{section 4}, we delve into the exploration of optimal solutions, while Section \ref{section 5} focuses on the study of optimal controls. Section \ref{section 6} demonstrates the application of our theoretical framework to COVID-19 through numerical simulation. The final section offers concluding remarks.

\section{SQEIAR Epidemic Model} \label{section 2}
\noindent

Let $\Omega$ be a connected, bounded domain in $\mathbb{R}^{d}$; $d \in \{1,2,3\}$ with a smooth boundary $\partial\Omega$. Let $x \in \Omega$ and $t \in [0, \tau]$ for $\tau > 0$. The nonlinear SQEIAR epidemiological model involves six non-negative state variables: $S(t,x)$, $Q(t,x)$, $E(t,x)$, $A(t,x)$, $I(t,x)$, and $R(t,x)$. In this context, $S(t,x)$ represents the total number of population at risk to catch the infection at time $t$ and position $x$, i.e.,  individuals who are susceptible to infection but not yet infected. $Q(t,x)$ denotes the number of quarantined individuals at time $t$ and position $x$. When susceptible individuals contract the disease, they enter the exposed group, denoted as $E(t,x)$; they are infected, but we cannot detect them. After a certain period of time, the exposed individuals start transmitting the infection at a rate $k$. Some of them exhibit symptoms, categorized as $I(t,x)$, while the others show no visible symptoms, labeled as $A(t,x)$. After a period of time, a fraction \((1-p)\) of the asymptomatic population moves to the infected group, while the remainder transitions to the group of individuals who have recovered, represented by \(R(t,x)\). A part $(1-\alpha)f$ of infected population will die because of the infection, while the rest joins the recovered group.  Furthermore, some recovered individuals may be susceptible to reinfection at a rate $\xi$.

To apply our control strategy mentioned in paragraph three of the introduction, we assume that quarantine must be implemented for the susceptible population in \(n\) (\(n \in \mathbb{N}^*\)) fixed regions, denoted as \(\omega_i\) (\(i=1, \ldots, n\)), within the total space \(\Omega\). Dividing the total space into smaller regions provides a more practical approach, allowing us to focus on areas where the disease spreads more rapidly due to factors such as population density, mobility, healthcare access, and environmental conditions. Quarantine is applied to the susceptible population in each region \(\omega_i\) at a rate \(\chi_{\omega_i}(x)v(t,x)\), $x\in \Omega$, where \(\chi_{\omega_i}\) is the characteristic function on $\omega_i$ (see Figure \ref{fig0}).  Thus, the total quarantine rate applied across all specified regions within the space $\Omega$ is give by 
\[ \sum_{i=1}^{n}\chi_{\omega_i}(x) v(t,x),\quad x\in \Omega.\]
In addition to quarantine, treatment is applied to infected individuals at a rate \(u(t,x)\) as a complementary control measure to minimize the mortality rate \((1-\alpha)f\), while also reducing the transmission rate. Figure \ref{Fig 1} provides a visual summary of the biological dynamics of our model with these control measures, illustrating the interaction between susceptible, quarantined, exposed, asymptomatic, infected, and recovered individuals.
\begin{figure}[h!]
	\centering
	\begin{tikzpicture}[scale=1.1]
	\draw[thick, fill=blue!10] (3, 2.5) ellipse (3.1cm and 2.1cm);
	\node at (5.5, 2.8) {\(\bf{ \Omega}\)};
	
	\draw[thick, draw=red, fill=red!30] (1.5, 3.5) ellipse (0.7cm and 0.5cm);
	\node at (1.5, 3.5) {\(\bf{\omega_1}\)};
	
	\node[red] at (1.3, 3.75) {$\ast$};
	\node[blue] at (1.85, 3.3) {$\star$};
	\node[blue] at (1.1, 3.5) {$\star$};
	\node[purple] at (1.35, 3.2) {¤};
	\node[cyan] at (1.8, 3.7) {$\divideontimes$};
	
	\draw[thick, green!60!black] (1.5, 3.5) ellipse (0.77cm and 0.57cm);
	
	\draw[thick,draw=red, fill=red!30] (3.5, 3.5) ellipse (0.8cm and 0.5cm);
	\node at (3.5, 3.5) {\(\omega_2\)};
	
	\node[red] at (3.1, 3.7) {$\ast$};
	\node[blue] at (3.8, 3.2) {$\star$};
	\node[red] at (4, 3.5) {$\ast$};
	\node[purple] at (3.4, 3.2) {¤};
	\node[cyan] at (3, 3.4) {$\divideontimes$};
	\node[cyan] at (3.5, 3.8) {$\divideontimes$};
	
	\draw[thick, green!60!black] (3.5, 3.5) ellipse (0.87cm and 0.57cm);
	
	\node at (4.5, 2.5) {\(\bf{\cdots}\)};
	
	\draw[thick, draw=red, fill=red!30] (4.5, 1.5) ellipse (0.8cm and 0.5cm);
	\node at (4.5, 1.5) {\(\bf{\omega_{n-1}}\)};
	
	\node[red] at (4.3, 1.8) {$\ast$};
	\node[red] at (5.1, 1.5) {$\ast$};
	\node[blue] at (4.7, 1.8) {$\star$};
	\node[blue] at (4.8, 1.2) {$\star$};
	\node[purple] at (4.4, 1.2) {¤};
	\node[cyan] at (3.9, 1.5)  {$\divideontimes$};
	
	\draw[thick, green!60!black] (4.5, 1.5) ellipse (0.87cm and 0.57cm);
	
	\draw[thick, draw=red, fill=red!30] (2, 1.5) ellipse (0.6cm and 0.7cm);
	\node at (2, 1.5) {\(\bf{\omega_n}\)};
	
	\node[red] at (1.6, 1.6) {$\ast$};
	\node[blue] at (2.2, 1.15) {$\star$};
	\node[purple] at (1.8, 1.15) {¤};
	\node[purple] at (2.35, 1.7) {¤};
	\node[cyan] at (1.9, 1.9) {$\divideontimes$};
	
	\draw[thick, green!60!black] (2, 1.5) ellipse (0.67cm and 0.77cm);
	
	\node[draw, text width=5.25cm, align=left, thick] at (9, 3.2) {
		\textcolor{red}{$\ast$ : Mobility factors} \\ 
		\textcolor{cyan}{$\divideontimes$ : Healthcare access}\\
		\textcolor{purple}{¤ : Population density} \\
		\textcolor{blue}{$\star$ : Other relevant factors}
	};
\node[draw, text width=5.25cm, align=left, thick] at (9, 1.5) {\textcolor{green!60!black}{\textcircled{} : Quarantine control} at rate 
	\begin{center}
	\bf{\(\chi_{\omega_i}(x) \textcolor{green!60!black}{v(t,x)}, \hspace{0.2cm} x \in \Omega\)}.
	\end{center}
};
	\end{tikzpicture}
	\caption{Quarantine control must be applied in the highlighted sub-regions \(\omega_i\) of \(\Omega\), where key factors influencing disease spread are present.}
	\label{fig0}
\end{figure}
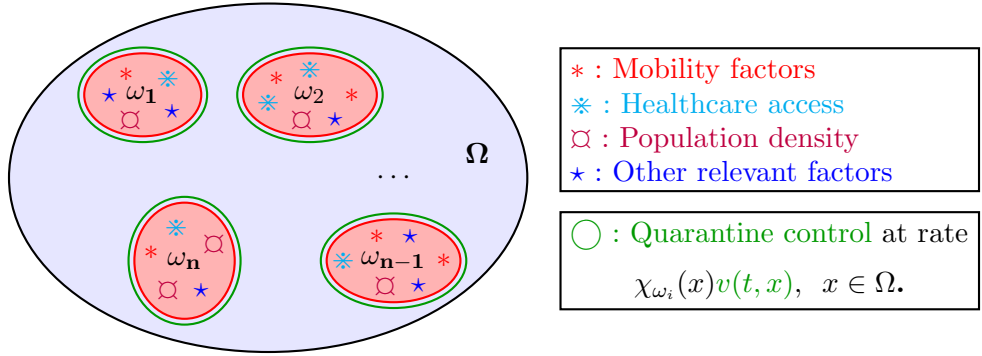

 The dynamics of our model are characterized by the following system of Partial Differential Equations (PDEs):
\begin{equation}
	\begin{cases}
		\begin{aligned}
			\frac{\partial S(t,x)}{\partial t} &= D_1\Delta S(t,x) - \Lambda(t,x)S(t,x) + \xi R(t,x) - \chi_{\Omega_n}(x) v(t,x)S(t,x),\\[5pt]
			\frac{\partial Q(t,x)}{\partial t} &= D_2\Delta Q(t,x) +\chi_{\Omega_n}(x) v(t,x)S(t,x),\\[5pt]
			\frac{\partial E(t,x)}{\partial t} &= D_3\Delta E(t,x) - kE(t,x) + \Lambda(t,x)S(t,x),\\[5pt]
			\frac{\partial A(t,x)}{\partial t} &= D_4\Delta A(t,x) - \eta A(t,x) + (1-z)kE(t,x),\\[5pt]
			\frac{\partial I(t,x)}{\partial t} &= D_5\Delta I(t,x) + zkE(t,x) + (1-p)\eta A(t,x)- (f+u(t,x))I(t,x),\\[5pt]
			\frac{\partial R(t,x)}{\partial t} &= D_6\Delta R(t,x) - \xi R(t,x) + \alpha fI(t,x) + u(t,x)I(t,x) + p\eta A(t,x),
		\end{aligned}
	\end{cases}
	\label{eq 1}
\end{equation}
for $t\in[0,\tau]$ and $x\in\Omega$. Here, $D_i$ for $i=1,\ldots,6$ represent diffusion coefficients, and \( 0 \leq u(t,x) \leq 1 \) and \( 0 \leq v(t,x) \leq \frac{1}{n} \) represent the treatment and quarantine control actions, respectively. The control \( u(t,x) \) corresponds to the intensity of treatment applied to infected individuals, aiming to reduce the severity of the infection and improve recovery rates. On the other hand, \( v(t,x) \) represents the level of quarantine applied to susceptible individuals in a given region. The function $\chi_{\Omega_n}(x)$ is the characteristic function on $\Omega_n$, where $\Omega_n=\bigcup\limits_{i=1}^{n}\omega_i$ such that $\left\{\omega_i\right\}_{i=1}^{n}\subset \Omega$ with $\bigcap\limits_{i=1}^{n}\omega_i=\emptyset$. In this case \(\sum\limits_{i=1}^{n}\chi_{\omega_i}=\chi_{\Omega_n}\).  The term $\Lambda(t,x)=\delta E(t,x)+(1-q)I(t,x)+\mu A(t,x)$, where $\delta>0$, $1-q>0$, and $\mu>0$ represent the reduced transmissibility factors associated with contacts from exposed, infected, and asymptomatic individuals, respectively. The parameters $\eta$, $p$, $k$, $z$, $\alpha$, $f$, and $\xi$ are all in the interval $[0,1]$ and $D_i>0$, $i=1,\ldots,6$.

On the boundary $\partial \Omega$ of $\Omega$, Neumann boundary conditions are employed. This implies that the flow is null on $\partial \Omega$ for all variables:
\begin{equation}
	\dfrac{\partial S(t,x)}{\partial \overrightarrow{\nu}} = \dfrac{\partial Q(t,x)}{\partial \overrightarrow{\nu}} = \dfrac{\partial E(t,x)}{\partial \overrightarrow{\nu}} = \dfrac{\partial A(t,x)}{\partial \overrightarrow{\nu}} = \dfrac{\partial I(t,x)}{\partial \overrightarrow{\nu}} = \dfrac{\partial R(t,x)}{\partial \overrightarrow{\nu}} = 0,
	\label{equ 2.2}
\end{equation}
for $t \in [0,\tau]$ and $x \in \partial\Omega$, where $\overrightarrow{\nu}$ is the unit normal vector on $\partial\Omega$. For the initial conditions, we denote by
\begin{equation}\label{equ 2.3}
	\begin{aligned}
		&\left(S(0,x), Q(0,x), E(0,x), A(0,x), I(0,x), R(0,x)\right) \\[5pt]
		&= \left(S_0(x), Q_0(x), E_0(x), A_0(x), I_0(x), R_0(x)\right),
	\end{aligned}
\end{equation}
the initial susceptible, quarantined, exposed, asymptomatic, infected, and recovered individuals, respectively, at position $x \in \Omega$. Let $N(t,x)=S(t,x)+E(t,x)+A(t,x)+I(t,x)+R(t,x)+Q(t,x)$ be the total population at time $t$ and at position $x\in \Omega$. We denote by $\Omega_\tau:=[0,\tau]\times\Omega$.
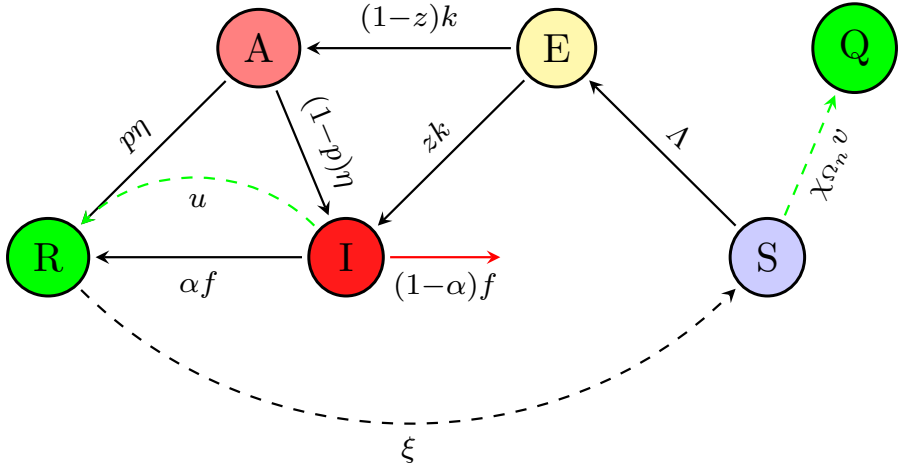
\begin{figure}[ht]
	\centering
	\scalebox{1.4}{
	\begin{tikzpicture}[node distance=2.8cm, auto, thick, >=stealth]
	
	\node [ellipse, draw, fill=yellow!40, align=center, minimum width=0.7cm, minimum height=0.7cm] (E) {E};
	\node [ellipse, draw, fill=red!50, left of=E, align=center, minimum width=0.7cm, minimum height=0.7cm] (A) {A};
	\node [ellipse, draw, fill=red!90, below left of=E, align=center, minimum width=0.7cm, minimum height=0.7cm] (I) {I};
	\node [ellipse, draw, fill=green, below left of=A, align=center, minimum width=0.7cm, minimum height=0.7cm] (R) {R};
	\node [ellipse, draw, fill=green, right of=E, align=center, minimum width=0.7cm, minimum height=0.7cm] (Q) {Q};
	\node [ellipse, draw, fill=blue!20, below right of=E, align=center, minimum width=0.7cm, minimum height=0.7cm] (S) {S};
	
	\draw[->, line width=0.6pt, >=stealth, shorten <=0.5mm, shorten >=0.5mm] (S) -- (E) node[midway, sloped, above] {\(\scriptstyle \Lambda\)};
	\draw[ dashed,->, draw=green, line width=0.6pt, shorten <=0.5mm, shorten >=0.5mm] (S) -- (Q) node[midway, sloped, below] {\(\scriptstyle\chi_{\Omega_n}v\)};
	\draw[->, line width=0.6pt, shorten <=0.5mm, shorten >=0.5mm] (E) -- node[midway, sloped, above] {\(\scriptstyle (1-z)k \)} (A);
	\draw[->, line width=0.6pt, shorten <=0.5mm, shorten >=0.5mm] (A) -- node[midway, sloped, above] {\(\scriptstyle (1-p)\eta \)} (I);
	\draw[->, line width=0.6pt, shorten <=0.5mm, shorten >=0.5mm] (A) -- node[midway, sloped, above] {\(\scriptstyle p\eta \)} (R);
	\draw[->, line width=0.6pt, shorten <=0.5mm, shorten >=0.5mm] (I) -- node[midway, sloped, below] {\(\scriptstyle \alpha f \)} (R);
	\draw[dashed,->, draw=green, line width=0.6pt, shorten <=0.5mm, shorten >=0.5mm, bend right=45] (I) to node[midway, sloped, below] {\(\scriptstyle u\)} (R);
	\draw[->, line width=0.6pt, shorten <=0.5mm, shorten >=0.5mm] (E) -- node[midway, sloped, above] {\(\scriptstyle zk \)} (I);
	\draw[->, line width=0.6pt, shorten <=0.5mm, shorten >=0.5mm, draw=red] (I) -- ++(1.5,0) node[midway, sloped, below] {\(\scriptstyle (1-\alpha) f\)};
	\draw[dashed,->, line width=0.6pt, shorten <=0.5mm, shorten >=0.5mm, bend right=45] (R) to node[midway, sloped, below] {\(\scriptstyle \xi  \)} (S);
	
	\end{tikzpicture}
}
	\caption{Biological Processes of the SQEIAR Model with our Proposed Controllers.}
	\label{Fig 1}
\end{figure}
\vspace{-1cm}
\section{Existence of solutions} \label{section 3}
\noindent

In this section, we investigate the existence of both strong and mild solutions for the system \eqref{eq 1}-\eqref{equ 2.3}. Additionally, we establish the positiveness and boundedness of these solutions. Let $y=(y_1,\ldots,y_6)=(S,Q,E,A,I,R)$ and $y_0=(y_{1,0},\ldots,y_{1,6})=(S_0,Q_0,E_0,A_0,I_0,R_0)$ such that $y(t,x):=y(t)(x)\break=(y_1(t)(x),\ldots,y_6(t)(x))=(y_1(t,x),\ldots,y_6(t,x))\!=\!(S(t,x),Q(t,x),E(t,x),\break A(t,x),I(t,x),R(t,x))$ for $(t,x)\in\Omega_\tau$. Let $H=\left(\mathbb{L}^{2}(\Omega)\right)^{6}$ be the Hilbert space endowed with the norm $\Vert\cdot\Vert$ defined by 
\begin{center}
	$\Vert z\Vert=\left(\sum\limits_{i=1}^{6}\displaystyle\int_{\Omega}\vert z_i(x)\vert^2dx\right)^{\frac{1}{2}}$,
\end{center}
for $z=(z_1,\ldots,z_6)\in H$. We define the linear operator $(B,D(B))$; $D(B)$ is the domain of $B$, by 
\begin{equation}
	\left.\begin{array}{l}
		Bz=D\Delta z \quad \text{for} \quad z\in D(B)=\left( H_0^{2}(\Omega)\right)^{6},
	\end{array}\right.
	\label{A}
\end{equation}
where $D$ is the diagonal matrix given by
\[D=\text{diag}\left(\left\{D_i\right\}_{1\leq i\leq 6}\right),\]
and
\begin{center}
	$H^{2}_0(\Omega)=\left\{z\in H^{2}(\Omega)\mid \text{ such that } \dfrac{\partial z}{\partial \overrightarrow{\nu}}=0 \text{ for a.e. on }\partial\Omega\right\}$.
\end{center}
Let $U=\left(\mathbb{L}^{2}(\Omega)\right)^2$. We define the admissible control set as:
\[\mathcal{U}_{ad} = \left\{\begin{array}{l} (u,v)\in U \mid 0\leq u(t,x)\leq 1 
	\text{ and } 0\leq v(t,x)\leq \dfrac{1}{n} \text{ for } (t,x)\in\Omega_\tau \end{array}\right\}.\]
Let $F: [0, \tau] \times D(F) \to H$ be the function defined as $F(t, z) = (F_1(t, z), \ldots, F_6(t, z))$, where
\begin{equation*}
	\left.\begin{array}{l}
		F_1(t, z)(x) = -\left[\delta z_3(x)\! +\! (1-q)z_5(x)\! +\! \mu z_4(x)\right] z_1(x)\! +\! \xi z_6(x)\!-\!\chi_{\Omega_n}(x) v(t, x)z_1(x),\\[10pt]
		F_2(t, z)(x) = \chi_{\Omega_n}(x) v(t, x)z_1(x),\\[10pt]
		F_3(t, z)(x) = -kz_3(x) + \left[\delta z_3(x) + (1-q)z_5(x) + \mu z_4(x)\right] z_1(x),\\[10pt]
		F_4(t, z)(x) = -\eta z_4(x) + (1-z)kz_3(x),\\[10pt]
		F_5(t, z)(x) = -fz_5(x) + zkz_3(x) + (1-p)\eta z_4(x) - u(t, x)z_5(x),\\[10pt]
		F_6(t, z)(x) = -\xi z_6(x) + \alpha fz_5(x) + u(t, x)z_5(x) + p\eta z_4(x),
	\end{array}\right.
\end{equation*}
for \(t \in [0, \tau]\), \(z \in D(F) = \left\{z = (z_1, \ldots, z_6) \in H \mid F(t, z) \in H \text{ for all } t \in [0, \tau] \right\}\), and fixed \(u, v \in \mathcal{U}_{ad}\). Under the above notations, system \eqref{eq 1}-\eqref{equ 2.3} can be written in the following abstract form: 
\begin{equation}
	\left\{\begin{array}{l}
		y'(t)=By(t)+F(t,y(t)), \quad t\in[0,\tau]\\[5pt]
		y(0)=y_0.
	\end{array}\right.
	\label{equation 2}
\end{equation}
We denote by $W^{1,2}\left(0,\tau;\mathbb{L}^{2}(\Omega)\right)$ the space of absolutely continuous functions $y:[0,\tau]\to \mathbb{L}^{2}(\Omega)$ such that $\dfrac{\partial y}{\partial t}\in \mathbb{L}^{2}(0,\tau;\mathbb{L}^{2}(\Omega))$.
\begin{proposition}
	$B$ is an infinitesimal generator of a $C_0$-semigroup $(T(t))_{t\geq 0}$  of contraction on $H$. Moreover, $T(t)$ is compact for $t>0$.
\end{proposition}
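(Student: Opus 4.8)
The plan is to apply the Lumer--Phillips theorem to obtain the contraction semigroup, exploiting the block-diagonal structure of $B$, and then to deduce the compactness of $T(t)$ from the fact that $B$ has compact resolvent together with the analyticity (self-adjointness) of the underlying Neumann Laplacian. First I would observe that since $D$ is a diagonal matrix with strictly positive entries, $B$ decouples into six independent scalar operators $B_i z_i = D_i\Delta z_i$ on $\mathbb{L}^2(\Omega)$, each with domain $\{z_i\in H^2(\Omega):\partial z_i/\partial\overrightarrow{n}=0\}$, so it suffices to treat one such Neumann-Laplacian block and assemble the results. The domain $D(B)=(H_0^2(\Omega))^6$ is dense in $H$ because the Neumann-admissible functions are dense in $\mathbb{L}^2(\Omega)$.

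Next I would verify dissipativity. For $z\in D(B)$ one computes $\langle Bz,z\rangle = \sum_{i=1}^{6} D_i\int_\Omega (\Delta z_i)\,\overline{z_i}\,dx$; applying Green's formula and using the Neumann condition $\partial z_i/\partial\overrightarrow{n}=0$ annihilates the boundary term, leaving $\langle Bz,z\rangle = -\sum_{i=1}^{6} D_i\int_\Omega |\nabla z_i|^2\,dx \le 0$, so $B$ is dissipative. For the maximality (range) condition I would show that $\lambda I - B$ is surjective for some $\lambda>0$: given $g\in H$, solving $(\lambda - D_i\Delta)z_i = g_i$ with Neumann conditions is a coercive variational problem handled by the Lax--Milgram lemma in $H^1(\Omega)$, and elliptic regularity then places the weak solution in $H^2(\Omega)$ with the correct boundary behavior. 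By Lumer--Phillips, $B$ generates a $C_0$-semigroup of contractions $(T(t))_{t\ge 0}$ on $H$.

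For the compactness of $T(t)$ I would note that the resolvent $(\lambda I - B)^{-1}$ maps $H$ continuously into $(H^2(\Omega))^6$, which embeds compactly into $H$ by the Rellich--Kondrachov theorem; hence $B$ has compact resolvent. Since $B$ is self-adjoint and non-positive, the semigroup it generates is analytic, and an analytic semigroup whose generator has compact resolvent is compact for every $t>0$. Equivalently, expanding in the orthonormal basis of Neumann-Laplacian eigenfunctions $(\phi_\ell)_\ell$ with eigenvalues $0=\lambda_0\le\lambda_1\le\cdots\to\infty$, each scalar component of $T(t)$ acts as $g\mapsto \sum_\ell e^{-D_i\lambda_\ell t}\langle g,\phi_\ell\rangle\phi_\ell$, which for $t>0$ is a norm-limit of finite-rank operators and therefore compact.

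The step I expect to be the main obstacle is the range/maximality condition, since it is the analytically substantive part: it relies on the correct variational formulation and on elliptic regularity up to the boundary for the Neumann problem. By contrast, the compactness is a soft consequence of the compact resolvent combined with the analyticity of the semigroup.
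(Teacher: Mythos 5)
Your proposal is correct and takes essentially the same route as the paper: the paper likewise reduces $B$ to the six scalar Neumann--Laplacian blocks, establishes generation by showing $(-B,D(B))$ is maximal monotone (which is exactly your dissipativity-plus-range argument, i.e.\ Lumer--Phillips), and obtains compactness of $T(t)$ from the compactness of each scalar semigroup generated by $D_i\Delta$. The only difference is one of detail: the paper cites these facts with a reference to Pazy, whereas you supply the underlying computations (Green's formula, Lax--Milgram with elliptic regularity, Rellich--Kondrachov plus analyticity/eigenfunction expansion).
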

\begin{proof} It suffices to show that $(-B,D(B))$ is maximal monotone on $H$. This follows from the fact that for each $D_i>0$; $i=1\ldots,6$, the operator $(-D_i\Delta,H_0^2(\Omega))$ is maximal monotone on $\mathbb{L}^{2}(\Omega)$. The compactness follows the fact that for each $D_i>0$; $i=1\ldots,6$, the operator $(D_i\Delta,H_0^2(\Omega))$ generates a compact semigroup on $\mathbb{L}^{2}(\Omega)$. For more details, we refer to \cite{pazy}.
\end{proof}
Let $H^{+}:=\{z=(z_1,\ldots,z_6)\in H\mid \text{ such that } z_i\geq 0 \text{ for } i=1,\ldots,6\}$. The following theorem demonstrates the existence of strong solutions for equation \eqref{equation 2}.
\begin{theorem}
	Let $(u, v)\in \mathcal{U}_{ad}$ be fixed. If $y_0\in D(B)\cap H^{+}$, then equation (\ref{equation 2}) has a unique strong solution
	\[ y = (y_1, \ldots, y_6) \!\in\! \left(W^{1,2}\left(0, \tau; \mathbb{L}^{2}(\Omega)\right) \cap \mathbb{L}^{2}(0, \tau; H_0^{2}(\Omega)) \cap \mathbb{L}^{\infty}(0, \tau; H^{1}(\Omega)) \right)^{6}.\]
	Moreover, $y_1, \ldots, y_6$ are positives and uniformly bounded in $\mathbb{L}^{\infty}(\Omega_\tau)$. There exists $C > 0$, independent of $(u, v)$, such that for $i = 1, \ldots, 6$,
	\begin{equation}
		\left\Vert \dfrac{\partial y_i}{\partial t}\right\Vert_{\mathbb{L}^{2}(\Omega_\tau)} + \Vert y_i\Vert_{\mathbb{L}^{2}(0, \tau; H_0^{2}(\Omega))} + \Vert y_i\Vert_{\mathbb{L}^{\infty}(0, \tau; H^{1}(\Omega))} + \Vert y_i\Vert_{\mathbb{L}^{\infty}(\Omega_\tau)} \leq C.
		\label{3.3}
	\end{equation}
\end{theorem}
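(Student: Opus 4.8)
The plan is to overcome the ill-posedness caused by the quadratic interaction terms in $F$ — the products $\delta z_3 z_1$, $(1-q)z_5 z_1$ and $\mu z_4 z_1$ appearing in $F_1$ and $F_3$, which map $(\mathbb{L}^2(\Omega))^6$ only into $\mathbb{L}^1(\Omega)$ and hence not into $H$ — by a truncation argument, and then to recover the genuine solution through uniform a priori estimates. Concretely, I would first fix a smooth cut-off $\theta_m:\mathbb{R}\to\mathbb{R}$ with $\theta_m(s)=s$ for $|s|\le m$ and $|\theta_m(s)|\le m+1$, and replace each state variable occurring inside a product by $\theta_m(z_i)$. The purely linear control term $\sum_i \chi_{\omega_i} v(t,x)z_1$ needs no truncation, since $v$ is bounded and $z_1\in\mathbb{L}^2$. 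The resulting truncated nonlinearity $F^m(t,\cdot)$ then maps $H$ into $H$ and is globally Lipschitz in $z$, uniformly in $t$, with a Lipschitz constant depending on $m$ and on the bounded controls $(u,v)\in U_{ad}$; moreover $t\mapsto F^m(t,z)$ is bounded, hence locally integrable, because $0\le u\le 1$ and $0\le v\le \frac1n$.

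Second, I would apply the cited Theorem 1.4 of Pazy to the truncated problem $y'=By+F^m(t,y)$, $y(0)=y_0$. Since $B$ generates a $C_0$-semigroup of contractions by the preceding Proposition and $F^m$ is globally Lipschitz, the theorem yields a unique mild solution $y^m$ on a maximal interval $[0,t_{\max}[$; global Lipschitzness precludes the blow-up alternative $\limsup_{t\to t_{\max}}\Vert y^m(t)\Vert=+\infty$, so $t_{\max}=\tau$ and $y^m$ exists on all of $[0,\tau]$.

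Third — the crux — I would establish positivity and $\mathbb{L}^\infty(\Omega_\tau)$ bounds for $y^m$ that are independent of $m$ and of $(u,v)$. For positivity I would invoke the quasi-positivity of the reaction structure: whenever a component vanishes the corresponding $F_i$ is non-negative provided the other components are non-negative, so the positive cone $H^+$ is invariant; this is precisely the argument of Theorem 2.1 in the Appendix of \cite{Elghandouri} already cited in Section \ref{section 2}. Granted $y^m\ge 0$, the population balance derived in Section \ref{section 2} applies: summing the six equations, the diffusion terms integrate to zero under the Neumann conditions \eqref{equ 2.2} and the only zeroth-order contribution is $(\alpha-1)fI\le 0$, whence the spatial mean satisfies $N(t)\le N(0)$. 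Combining this $\mathbb{L}^1$-control with the sign structure of each equation and a comparison/maximum-principle argument then yields $\Vert y^m(t)\Vert_{\mathbb{L}^\infty(\Omega)}\le C_0$ with $C_0$ depending only on the data and $\Vert y_0\Vert_{\mathbb{L}^\infty}$, not on $m$. Choosing $m>C_0$ forces $\theta_m(y_i^m)=y_i^m$, so $y^m$ solves the original problem \eqref{equation 2}, and uniqueness is inherited from the truncated problem. To upgrade the mild solution to a strong one with the stated regularity I would use $y_0\in D(B)$ together with the fact that $B$ is the self-adjoint, non-positive Neumann Laplacian, so $(T(t))_{t\ge0}$ is analytic; once the $\mathbb{L}^\infty$ bound places $F(\cdot,y)$ in $\mathbb{L}^2(\Omega_\tau)$, maximal $\mathbb{L}^2$-regularity gives $\partial_t y_i\in\mathbb{L}^2(\Omega_\tau)$ and $y_i\in\mathbb{L}^2(0,\tau;H_0^2(\Omega))$, while testing each equation with $-\Delta y_i$ and applying Gronwall yields the $\mathbb{L}^\infty(0,\tau;H^1(\Omega))$ bound. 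Since all constants depend only on $\Vert y_0\Vert$, the bound $C_0$, and the norms of the uniformly bounded controls, this establishes \eqref{3.3} with $C$ independent of $(u,v)$.

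The step I expect to be the main obstacle is the $m$-independent $\mathbb{L}^\infty(\Omega_\tau)$ bound: because the diffusion coefficients $D_i$ differ, the total mass $N(t,x)$ does not satisfy a single heat equation, so the maximum principle cannot be applied to $N$ pointwise and one only controls its spatial average. Closing the argument therefore requires a more delicate component-wise bootstrap (or an $\mathbb{L}^p$-iteration) exploiting positivity and the partly triangular coupling of the system, and it is exactly this uniform bound that both justifies discarding the truncation and secures the control-independent constant $C$.
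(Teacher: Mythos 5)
Your overall skeleton (truncate, solve via Pazy's theorem, prove positivity, obtain an $\mathbb{L}^{\infty}$ bound strong enough to remove the truncation, then upgrade regularity) matches the paper's, and your positivity and regularity steps are acceptable variants of what the paper does (the paper proves positivity via the shift $B-MI$ with $F^m(t,z)+Mz\geq 0$ on $H^{+}$ and Duhamel's formula, and obtains the $\mathbb{L}^{\infty}(0,\tau;H^{1}(\Omega))$ bound by multiplying each equation by $y_i$ and using Green's formula rather than testing with $-\Delta y_i$). However, there is a genuine gap at exactly the step you flag yourself: the $m$-independent $\mathbb{L}^{\infty}(\Omega_\tau)$ bound. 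Because you truncate the \emph{arguments} inside the quadratic products ($z_i\mapsto\theta_m(z_i)$), your truncated nonlinearity $F^m$ is globally Lipschitz but \emph{unbounded}, and your proposed route to the sup bound --- positivity plus the mass balance $N(t)\leq N(0)$ plus ``a comparison/maximum-principle argument'' --- does not close: $N(t)$ controls only the spatial average (an $\mathbb{L}^{1}$-type quantity), which gives no pointwise control, and, as you yourself note, the unequal diffusion coefficients prevent applying a maximum principle to the summed equation. A component-wise bootstrap is further obstructed by the circular coupling of the system ($y_1$ is fed by $\xi y_6$, $y_6$ by $y_4$ and $y_5$, and so on), so this is not a routine repair but the actual mathematical content of the theorem, which your proposal leaves open.

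The paper's proof avoids this difficulty by truncating the \emph{values} of $F$ instead: $F^m$ is defined to equal $F$ where $-m\leq F\leq m$ and is cut off at $\pm m$ otherwise, so $\Vert F^m_i\Vert_{\mathbb{L}^{\infty}(\Omega_\tau)}\leq m$ holds by construction. Then, for each component separately, the function $z_i^m=y_i^m-M_i t-\Vert y_{i,0}\Vert_{\mathbb{L}^{\infty}(\Omega)}$ with $M_i\geq\Vert F^m_i\Vert_{\mathbb{L}^{\infty}(\Omega_\tau)}$ solves a scalar Neumann heat equation with non-positive initial datum and non-positive source, whence $z_i^m\leq 0$ and $0\leq y_i^m(t,x)\leq M_i t+\Vert y_{i,0}\Vert_{\mathbb{L}^{\infty}(\Omega)}$. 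This argument is purely component-wise, so the unequal $D_i$ cause no problem and no $\mathbb{L}^p$ iteration is needed; taking $m$ large enough, the truncation is never active along the solution, so $y^m$ solves the original problem, and the constant is independent of $(u,v)$ because $0\leq u\leq 1$ and $0\leq v\leq \frac{1}{n}$. If you wish to keep your argument-truncation, you must supply the missing sup bound yourself (for instance by an invariant-region or Moser-iteration argument); as written, your proof is incomplete precisely where the paper's truncation choice makes the estimate immediate.
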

\begin{proof} The function \( F \) is not defined on the entire space \( H \), which means that equation \eqref{equation 2} is not well-defined on the space \( H \). Then, we cannot apply directly Theorem \ref{pazy_thm}. Therefore, the truncation technique is applied. Specifically, the truncation procedure is applied to function \( F \). For simplicity, consider a function \( G \) and any positive integer \( m \).
	Define the sets:
	\begin{equation*}
		\left\{\begin{array}{l}
			D_1^m(G) = \{(t, z) \mid -m \leq G(t, z) \leq m \}\\[10pt]
			D_2^m(G) = \{(t, z) \mid G(t, z) < -m\}\\[10pt]
			D_3^m(G) = \{(t, z) \mid G(t, z) > m\}.
		\end{array}\right.
	\end{equation*}
	Further, define the truncation form of the function $G(t, z)$, denoted by $G^m(t, z)$, as
	\begin{equation*}
		G^m(t, z) = \left\{\begin{array}{l}
			G(t, z) \quad \text{for } (t, z) \in D_1^m(G)\\[10pt]
			- m \quad \text{for } (t, z) \in D_2^m(G)\\[10pt]
			m \quad \text{for } (t, z) \in D_3^m(G).
		\end{array}\right.
	\end{equation*}
	Then, the truncation form of $F$ is defined as $F^m(t,z)=\left(F_1^m(t,z),\ldots,F_s^m(t,z)\right)$ for $(t,z)\in \bigcup\limits_{l=1}^{3}D_l^m(F)$. It is easily seen that \( F^m(t,z) \) is bounded in \( t \in [0, \tau] \) and is locally Lipschitz continuous in \( z \in H \), uniformly with respect to \( t \in [0, \tau] \). By Theorem \ref{pazy_thm}, we show that equation:
	\begin{equation}\label{equation 3.4}
		\left\{\begin{array}{l}
			y'(t)=By(t)+F^m(t,y(t)), \quad t\in[0,\tau]\\[5pt]
			y(0)=y_0 \in D(B),
		\end{array}\right.
	\end{equation}
	has a unique strong solution $y^m=(y^m_1,\ldots,y^m_6)$ defined on $[0,\tau_m]$ for some $0<\tau_m\leq \tau$ and satisfying
	
	\begin{equation}
		y^m\in \left(W^{1,2}\left(0,\tau_m;\mathbb{L}^{2}(\Omega)\right)\cap \mathbb{L}^{2}(0,\tau_m;H_0^{2}(\Omega)) \right)^{6}.
	\end{equation}
	Let $M>0$ be large enough such that $F^m(t,z)+Mz\geq 0$ for each $z\in H^{+}$ and $t\in [0,\tau_m]$. Equation \eqref{equation 3.4} can be written in the following equivalent form:
	
	\begin{equation}
		\left\{
		\begin{array}{l}
			y'(t) = (B - MI_d)y(t) + F^m(t, y(t)) + My(t), \quad t\in[0,\tau]\\[10pt]
			y(0) = y_0,
		\end{array}
		\right.
		\label{equation 3}
	\end{equation}
	where $I_d$ is the identity map on $H$. The strong solution of equation \eqref{equation 3} is given by 
	\begin{equation*}
		y^m(t) = e^{-Mt}T(t)y_0 + \int_{0}^{t}\!\!e^{-M(t-s)}T(t-s)\left[F^m(s,y^m(s))+My^m(s)\right] \, ds,
	\end{equation*}
	for $t\in[0,\tau_m]$. Since $y_0\geq 0$ and $F^m(t,z)+Mz\geq 0$ for $z\in H^{+}$ and $t\in [0,\tau_m]$, it follows that $y^m(t,x)\geq 0$, that is $y^m_i(t,x)\geq 0$ for $(t,x)\in [0,\tau_m]\times\Omega$ and $i=1,\ldots,6$. Next, we show that $y^m_1,\ldots,y^m_6$ are uniformly bounded in $\mathbb{L}^{\infty}([0,\tau_m]\times\Omega)$. Let $(T_i(t))_{t\geq 0}$, for $i=1,\ldots,6$ be the $C_0$-semigroups  generated by $(B_i,D(B_i))$ defined by 
	\begin{center}
		$B_iz=D_i\Delta z$ \text{ for } $z\in D(B_i)=H^{2}_0(\Omega)$.
	\end{center}
	Let $M^m_i=\max\left\{\Vert F^m_i(\cdot,y^m(\cdot))\Vert_{\mathbb{L}^{\infty}(\Omega_\tau) }, \Vert y_{i,0}\Vert_{\mathbb{L}^{\infty}(\Omega)} \right\}$. It is obvious that function $z^m_i(t,x)=y^m_i(t,x)-M^m_it-\Vert y_{i,0}\Vert_{\mathbb{L}^{\infty}(\Omega)}$ satisfies the following equation:
	
	\begin{equation}
		\left\{	\begin{array}{l}
			\dfrac{\partial z^m_i(t,x)}{\partial t}=B_iz^m_i(t,x)+F^m_i(t,y^m(t))(x)-M^m_i\\[10pt]
			z^m_i(0,x)=y_{i,0}-\Vert y_{i,0}\Vert_{\mathbb{L}^{\infty}(\Omega)}.
		\end{array}\right.
		\label{equation 4}
	\end{equation}
	for $(t,x)\in [0,\tau_m]\times\Omega$. Equation (\ref{equation 4}) has a unique strong solution defined by 
	\begin{eqnarray*}
		z^m_i(t,x)&=&(T_i(t)(y_{i,0}-\Vert y_{i,0}\Vert_{\mathbb{L}^{\infty}(\Omega)}))(x)\\
		& &+\displaystyle\int_{0}^{t} \left(T_i(t-s)\left[F^m_i(s,y^m(s))-M^m_i\right]\right)(x)ds,
	\end{eqnarray*}
	for $(t,x)\in [0,\tau_m]\times\Omega$. Since $y_{i,0} - \Vert y_{i,0}\Vert_{\mathbb{L}^{\infty}(\Omega)} \leq 0$, and $F^m_i(t,y^m(t)) - M^m_i \leq 0$ for $t \in [0,\tau_m]$, it follows that $z^m_i(t,x) \leq 0$ for each $(t,x) \in [0,\tau_m]\times\Omega$. Then,
	\begin{equation}\label{M_i}
		0 \leq y^m_i(t,x) \leq M^m_it + \Vert y_{i,0}\Vert_{\mathbb{L}^{\infty}(\Omega)}, \quad i=1,\ldots,6, \quad (t,x) \in [0,\tau_m]\times\Omega.
	\end{equation}
	As a consequence, $y^m_i \in \mathbb{L}^{\infty}([0,\tau_m]\times\Omega) \hspace{0.1cm} \text{ for } \hspace{0.1cm} i=1,\ldots,6$, moreover,
	\[
		0\leq  y^m_i(t,x)\leq M^m_i\tau_m+\Vert y_{i,0}\Vert_{\mathbb{L}^{\infty}(\Omega)}, \quad i=1,\ldots,6, \quad  (t,x) \in \Omega_\tau.
	\]
	This implies also that $\tau_m=\tau$. 
	
	As $M^m_i$ is bounded, there exists a constant $t_{max} \in (0, \tau)$ such that we can choose an integer $m > 0$ satisfying $m > M^m_i t_{max} + \| y_{i,0} \|_{\mathbb{L}^{\infty}(\Omega)}$.
	 By \eqref{M_i}, we get that
	\begin{center}
		$0 \leq y^m_i(t,x)\leq m$, \quad $i=1,\ldots,6$, \quad  $(t,x) \in [0,t_{max})\times \Omega$.
	\end{center}
	Therefore, from the definition of \( F^m(t, y^m(t)) \), it follows that \( F^m(t, y^m(t)) = F(t, y(t)) \) and \( y^m(t) = y(t) \) for \( t \in [0, t_{max})\). Thus, \( y \) is a local solution of equation \eqref{equation 2} on \( [0, t_{max}) \times \Omega \). Next, we show that the local positive solution \( y \) is, in fact, a global solution in \( \Omega_\tau \). To establish this, it suffices to demonstrate that \( y_i \), for \( i = 1, \ldots, 6 \), are uniformly bounded with respect to \( t_{max} \) in $[0,t_{max})\times \Omega$. Let 	
	\[
	N(t) = \int_{\Omega} N(t, x) \, dx \quad \text{for} \quad t \in [0, t_{max}),
	\]
	be the total population in the entire spatial domain \( \Omega \) at time \( t \in [0, t_{max}) \). As \( y \) is a strong solution of equation \eqref{equation 2}, we obtain
	\[
	\frac{\partial N(t)}{\partial t} = \int_{\Omega} \frac{\partial N(t,x)}{\partial t} \, dx = \int_{\Omega} \left( \sum_{i=1}^{6} D_i \Delta y_i(t,x) \right) \, dx - (1-\alpha) f y_5(t,x).
	\]
	Since \( \dfrac{\partial y_i(t,x)}{\partial \overrightarrow{\nu}} = 0 \) on \( \partial \Omega \), using the Green Formula, we obtain
	\[
	\frac{\partial N(t)}{\partial t} = \int_{\Omega} \frac{\partial N(t,x)}{\partial t} \, dx = - (1-\alpha) f y_5(t,x).
	\]
	As \( y_5(t,x) \geq 0 \) for \( (t,x) \in [0, t_{max}) \times \Omega \) and \( \alpha,f \in [0,1] \), it follows that
	$\frac{\partial N(t)}{\partial t} \leq 0$.
	This implies that \( N(t) \leq N(0) \) for all \( t \in [0, t_{max}) \). Since \( N(t) \) represents the total population in the spatial domain \( \Omega \) at time \( t \in [0, t_{max}) \), we have
	\[
	N(t,x) \leq N(t) \leq N(0) \quad \text{for} \quad (t,x) \in [0, t_{max}) \times \Omega.
	\]
	Due to the positivity of the solution, we obtain
	\[
	0 \leq y_i(t,x) \leq N(0), \quad i = 1, \ldots, 6, \quad (t,x) \in [0, t_{max}) \times \Omega,
	\]
	where
	\[
	N(0) = \int_{\Omega} N(0,x) \, dx.
	\]
	The constant \( N(0) \) is independent of \( t_{max} \). Thus, the solutions \( y_i(t) \), for \( i = 1, \ldots, 6 \), are uniformly bounded with respect to \( t_{max} \) in $[0,t_{max})\times \Omega$. This shows that \( y \) is a global solution of equation \eqref{equation 2} on \( \Omega_\tau \).

	In the sequel, we show that $y=(y_1,\ldots,y_6)\in \left( \mathbb{L}^{\infty}(0,\tau;H^{1}(\Omega)) \right)^{6}$. It follows from system (\ref{eq 1}) that 
	\begin{equation*}
		\dfrac{\partial y_i(t)}{\partial t}=D_i\Delta y_i(t)+F_i(t,y(t)) \hspace{0.1cm} \text{ for } \hspace{0.1cm} i=1,\ldots,6.
	\end{equation*}
	Then, 
	\begin{equation*}
		y_i(t)\dfrac{\partial y_i(t)}{\partial t}=D_iy_i(t)\Delta y_i(t)+y_i(t)F_i(t,y(t)) \hspace{0.1cm} \text{ for } \hspace{0.1cm} i=1,\ldots,6.
	\end{equation*}
	Since $y_i$ is regular, using the Green Formula, we can affirm that
	\begin{eqnarray*}
		\displaystyle\int_{\Omega}y_i(t)\dfrac{\partial y_i(t)}{\partial t}&=&D_i\displaystyle\int_{\Omega} y_i(t)\Delta y_i(t)+\displaystyle\int_{\Omega}y_i(t)F_i(t,y(t))\\
		&=& -D_i \displaystyle\int_{\Omega} \vert\nabla y_i(t)\vert^{2}+\displaystyle\int_{\Omega}y_i(t)F_i(t,y(t)).
	\end{eqnarray*}
	That is,
	\[D_i \displaystyle\int_{\Omega} \vert\nabla y_i(t)\vert^{2}=\displaystyle\int_{\Omega}y_i(t)F_i(t,y(t))-\displaystyle\int_{\Omega}y_i(t)\dfrac{\partial y_i(t)}{\partial t}.\]
	Hence, 
	\begin{eqnarray*}
		 D_i \displaystyle\int_{\Omega} \vert\nabla y_i(t)\vert^{2}\!\!\!\!
		&\leq& \!\!\!\displaystyle\int_{\Omega}\vert y_i(t)\dfrac{\partial y_i(t)}{\partial t}\vert+\displaystyle\int_{\Omega}\vert y_i(t)F_i(t,y(t))\vert\\
		&\leq & \!\!\!\! \left(\displaystyle\int_{\Omega}\vert y_i(t)\vert^2\right)^{\frac{1}{2}}\!\cdot\!\left(\displaystyle\int_{\Omega}\vert \dfrac{\partial y_i(t)}{\partial t}\vert^2\right)^{\frac{1}{2}}\!\!+\!\text{mes}(\Omega)\left(\Vert y_i\Vert_{\mathbb{L}^{\infty}(\Omega_\tau)}\right)\!\cdot\!\left(\Vert F_i\Vert_{\mathbb{L}^{\infty}(\Omega_\tau)}\right)\!\!,
	\end{eqnarray*}
where $\text{mes}(\Omega)$ denotes the measure of $\Omega$. Since $y_i\in W^{1,2}([0,\tau];\mathbb{L}^{2}(\Omega))\cap \mathbb{L}^{\infty}(\Omega_\tau)$, it follows that $y_i\in \mathbb{L}^{\infty}([0,\tau],H^{1}(\Omega))$. By utilizing the facts that $0\leq u(t,x)\leq 1$ and $0\leq v(t,x)\leq \frac{1}{n}$, we can affirm the existence of a constant $C>0$, independent of $u$ and $v$, such that the inequality in (\ref{3.3}) holds for $i=1,\ldots,6$.
\end{proof}
\vspace{-0.8cm}
\section{Existence of the optimal solutions} \label{section 4}
\noindent

In this section, we establish the existence of optimal solutions for system \eqref{eq 1}-\eqref{equ 2.3}. We define the admissible class for system \eqref{eq 1}-\eqref{equ 2.3} by:
\begin{center}
	$\mathcal{A}_{\text{ad}} = \left\{\begin{array}{l} (y, u, v) \mid y\text{ the unique solution of system} \\[5pt] \text{ \eqref{eq 1}-\eqref{equ 2.3} corresponding to } (u, v) \in \mathcal{U}_{a, d} \end{array}\right\}$.
\end{center}
We define the cost functional $\mathcal{J}(\cdot,\cdot,\cdot)$ on $\mathcal{A}_{ad}$ by
\begin{align*}
	\mathcal{J}(y,u,v) &= \int_{0}^{\tau}\int_{\Omega}\left(\rho_1\chi_{\Omega_n}(x)y_1(t,x)  + \sum_{i=3}^{5} \rho_iy_i(t,x)\right)\,dx\,dt \\[8pt]
	&\quad + \dfrac{\sigma_1}{2}\int_{0}^{\tau}\int_{\Omega} u(t,x)^2\,dx\,dt + \dfrac{\sigma_2}{2}\int_{0}^{\tau}\int_{\Omega}\chi_{\Omega_n}(x) v(t,x)^2\,dx\,dt,
\end{align*}
for $(y,u,v)\in \mathcal{A}_{ad}$, where $\rho_1$, $\rho_3$, $\rho_4$, $\rho_5$, $\sigma_1$, and $\sigma_2$ are weight parameters. The term
\begin{center}
	$\displaystyle\int_{\Omega}\left(\rho_1\chi_{\Omega_n}(x)y_1(t,x)  + \sum\limits_{i=3}^{5} \rho_iy_i(t,x)\right)\,dx$,
\end{center}
represents the \textit{epidemic cost} at time $t$. It is a combination of weighted contributions from individuals who are susceptible, exposed, asymptomatic, or infected. Each group has a corresponding weight $\rho_i$ $(i=1,3,4,5)$, which represents the relative importance or severity of the epidemic state. This term assesses the overall impact of the epidemic on the population over time. The expression $\frac{\sigma_1}{2}\int_{\Omega} u(t,x)^2\,dx$ denotes the \textit{control cost} associated with the treatment intervention at time $t$. The parameter $\sigma_1$ serves as the gain of the treatment controller, balancing the significance of implementing the treatment against its associated costs. The term $\frac{\sigma_2}{2}\int_{\Omega}\chi_{\Omega_n}(x) v(t,x)^2\,dx$ represents the \textit{control cost} associated with the quarantined individuals, and it quantifies the cost of implementing quarantine measures at time $t$. The parameter $\sigma_2$ represents the gain of the quarantine controller. The optimal control problem is formulated as:
\begin{equation}
	\min\{\mathcal{J}(y,u,v) \mid (y,u,v)\in\mathcal{A}_{ad}\}.
	\label{eq 4.1}
\end{equation}

\begin{definition}
	A solution of problem \eqref{eq 4.1} is called an optimal solution of \eqref{eq 1}-\eqref{equ 2.3}.
\end{definition}
\begin{definition} Let $(y^*, u^*, v^*)$ be an optimal solution of problem \eqref{eq 4.1}. The control functions $(u^*, v^*)$ are called optimal controls for system \eqref{eq 1}-\eqref{equ 2.3}.
\end{definition}
\begin{theorem} There exist a unique $(y^*,u^*,v^*)\in \mathcal{A}_{ad}$ such that $\mathcal{J}(y^*,u^*,v^*)=\min\{\mathcal{J}(y,u,v)\mid \text{for }(y,u,v)\in \mathcal{A}_{ad} \}$.
\end{theorem}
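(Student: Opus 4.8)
The plan is to apply the direct method of the calculus of variations, leaning on the uniform a priori estimate \eqref{3.3}, whose constant $C$ is independent of $(u,v)$. First I would note that $\mathcal{J}(y,u,v)\geq 0$ on $\mathcal{A}_{ad}$: the state components are nonnegative by the positivity established in the previous theorem, the weights $\rho_i$ are positive, and the two control penalty terms are manifestly nonnegative. Hence $d:=\inf\{\mathcal{J}(y,u,v)\mid (y,u,v)\in\mathcal{A}_{ad}\}$ is finite and nonnegative, and I can fix a minimizing sequence $(y^k,u^k,v^k)\in\mathcal{A}_{ad}$ with $\mathcal{J}(y^k,u^k,v^k)\to d$.

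Next I would extract weak limits. The controls satisfy $0\leq u^k\leq 1$ and $0\leq v^k\leq \frac{1}{n}$, so $(u^k,v^k)$ is bounded in $U$, while by \eqref{3.3} each $y_i^k$ is bounded in $W^{1,2}(0,\tau;\mathbb{L}^2(\Omega))\cap \mathbb{L}^2(0,\tau;H_0^2(\Omega))$ uniformly in $k$. As these are reflexive Hilbert spaces, after passing to a subsequence I obtain $u^k\rightharpoonup u^*$ and $v^k\rightharpoonup v^*$ weakly in $U$, together with $y^k\rightharpoonup y^*$ weakly in $\left(\mathbb{L}^2(0,\tau;H_0^2(\Omega))\right)^6$ and $\frac{\partial y^k}{\partial t}\rightharpoonup \frac{\partial y^*}{\partial t}$ weakly in $\left(\mathbb{L}^2(\Omega_\tau)\right)^6$. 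Since $\mathcal{U}_{ad}$ is convex and closed in $U$, it is weakly closed, so $(u^*,v^*)\in\mathcal{U}_{ad}$.

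The crux is to show $(y^*,u^*,v^*)\in\mathcal{A}_{ad}$, that is, that $y^*$ solves \eqref{eq 1}-\eqref{equ 2.3} with controls $(u^*,v^*)$; the obstacle is passing to the limit in the quadratic nonlinearity $\Lambda S=(\delta E+(1-q)I+\mu A)S$ and in the bilinear control term $v^kS^k$, where weak convergence alone is insufficient. To handle this I would invoke the Aubin--Lions--Simon lemma: the bound on $y^k$ in $\mathbb{L}^2(0,\tau;H_0^2(\Omega))$ together with the bound on $\frac{\partial y^k}{\partial t}$ in $\mathbb{L}^2(\Omega_\tau)$, via the compact embedding $H_0^2(\Omega)\hookrightarrow\mathbb{L}^2(\Omega)$, yields strong convergence $y^k\to y^*$ in $\left(\mathbb{L}^2(\Omega_\tau)\right)^6$ along a further subsequence, hence a.e.\ convergence. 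The uniform $\mathbb{L}^\infty(\Omega_\tau)$ bound from \eqref{3.3} then lets me pass to the limit in the products: each quadratic term converges in $\mathbb{L}^2(\Omega_\tau)$ by combining strong and $\mathbb{L}^\infty$-bounded convergence, and $v^kS^k\rightharpoonup v^*S^*$ as a product of a weakly convergent factor with a strongly convergent one. Passing to the limit in the weak formulation of \eqref{equation 2} (using that $\Delta y^k\rightharpoonup \Delta y^*$ weakly in $\mathbb{L}^2(\Omega_\tau)$) then identifies $y^*$ as the solution associated to $(u^*,v^*)$, so $(y^*,u^*,v^*)\in\mathcal{A}_{ad}$.

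Optimality follows from weak lower semicontinuity of $\mathcal{J}$: the state-dependent integral terms converge along the subsequence by the strong $\mathbb{L}^2$ convergence of $y^k$, while the two control penalty terms are convex and continuous, hence weakly lower semicontinuous. Therefore $\mathcal{J}(y^*,u^*,v^*)\leq \liminf_k \mathcal{J}(y^k,u^k,v^k)=d$, and since $(y^*,u^*,v^*)\in\mathcal{A}_{ad}$ we conclude $\mathcal{J}(y^*,u^*,v^*)=d$, establishing existence of a minimizer. For uniqueness I would appeal to strict convexity of the cost in $(u,v)$ stemming from the quadratic control penalties; the genuine difficulty, which I expect to be the main obstacle, is that the control-to-state map $(u,v)\mapsto y$ is \emph{nonlinear}, so the reduced functional $(u,v)\mapsto \mathcal{J}(y(u,v),u,v)$ need not be convex. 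A rigorous uniqueness argument therefore likely requires either balancing the strict convexity of the control penalty against a Lipschitz estimate for the control-to-state map (obtainable on $[0,\tau]$ through Gronwall-type bounds built on \eqref{3.3}) or imposing a smallness condition on the horizon or the weights, which I would make explicit.
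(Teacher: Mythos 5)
Your existence argument is correct and shares the direct-method skeleton of the paper's proof (minimizing sequence, the uniform a priori bound \eqref{3.3} with constant independent of the controls, weak sequential compactness of $(u_m,v_m)$, closedness and convexity of $\mathcal{U}_{ad}$, weak lower semicontinuity of $\mathcal{J}$), but your compactness step is genuinely different. The paper works at the level of the abstract evolution equation: it exploits the compactness of the semigroup $(T(t))_{t>0}$ generated by $B$, the mild-solution formula, and an equicontinuity estimate to apply the Arzel\`a--Ascoli theorem and obtain $y_m\to y^{*}$ strongly in $\mathcal{C}([0,\tau];H)$, uniformly in time. You instead invoke the Aubin--Lions--Simon lemma, combining the bound on $y^{k}$ in $\left(\mathbb{L}^{2}(0,\tau;H_0^{2}(\Omega))\right)^{6}$ with the bound on $\partial y^{k}/\partial t$ in $\left(\mathbb{L}^{2}(\Omega_\tau)\right)^{6}$ and the compact embedding $H_0^{2}(\Omega)\hookrightarrow \mathbb{L}^{2}(\Omega)$, which yields strong convergence in $\left(\mathbb{L}^{2}(\Omega_\tau)\right)^{6}$ plus a.e.\ convergence. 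Both routes deliver exactly the strong convergence needed to pass to the limit in the quadratic nonlinearity (the paper via the splitting $y_{i,m}y_{j,m}-y^{*}_iy^{*}_j=(y_{i,m}-y^{*}_i)y_{j,m}+y^{*}_i(y_{j,m}-y^{*}_j)$, you via a.e.\ convergence plus the uniform $\mathbb{L}^{\infty}(\Omega_\tau)$ bound); the semigroup route buys uniform-in-time convergence, while yours is the more standard parabolic-PDE mechanism and does not need norm continuity or compactness of $T(t)$. You are also more careful on a point the paper glosses over: $F$ depends on the controls through the bilinear terms $u_m y_{5,m}$ and $\sum_{i}\chi_{\omega_i}v_m y_{1,m}$, for which only \emph{weak} convergence of $(u_m,v_m)$ is available, so the paper's bare claim that $F(t,y_m)\to F(t,y^{*})$ is incomplete as stated; your weak-times-strong argument ($v^{k}S^{k}\rightharpoonup v^{*}S^{*}$) is precisely what is required to identify the limit equation.

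Your caution about uniqueness is warranted, and it points to a genuine defect in the theorem as stated: the paper's proof establishes only existence. Weak lower semicontinuity of the convex functional $(y,u,v)\mapsto\mathcal{J}(y,u,v)$ on the product space says nothing about strict convexity of the reduced functional $(u,v)\mapsto\mathcal{J}(y(u,v),u,v)$, because the control-to-state map is nonlinear; the paper's final chain of inequalities ends with $\mathcal{I}=\mathcal{J}(y^{*},u^{*},v^{*})$ and never addresses uniqueness at all. So the honest conclusion is the one you draw: either weaken the statement to existence of a minimizer, or add a quantitative hypothesis (smallness of $\tau$ or of the weights $\rho_i$ relative to $\sigma_1,\sigma_2$, combined with a Gronwall-type Lipschitz estimate for $(u,v)\mapsto y$) under which the reduced functional becomes strictly convex.
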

\begin{proof} Let $\mathcal{I}=\min\{\mathcal{J}(y,u,v)\mid (y,u,v)\in\mathcal{A}_{ad}\}$. Since $0\leq \mathcal{I} <+\infty$, we can affirm that there exists a sequence $\{(y_m,u_m,v_m)\}_{m\geq 0}\subset \mathcal{A}_{a,d}$ such that $\lim\limits_{m\to +\infty}\mathcal{J}(y_m,u_m,v_m)=\mathcal{I}$. Note that, 
	\begin{equation}
		\left\{\begin{array}{l}
			\dfrac{\partial y_m(t)}{\partial t}=By_m(t)+F(t,y_m(t)), \quad t\in[0,\tau]\\[8pt]
			y_m(0)=y_{0}.
		\end{array}\right.
		\label{equation 4.2}
	\end{equation}
	By formula \eqref{3.3}, we show the existence of a constant $\bar{C} > 0$ such that 
	\begin{equation}
		\begin{aligned}
			&\max\left\{\left\|\frac{\partial y_m}{\partial t}\right\|_{\left(\mathbb{L}^{2}(\Omega_\tau)\right)^{6}},\|y_m\|_{\left(\mathbb{L}^{2}(0, \tau; H_0^{2}(\Omega))\right)^{6}},\|y_m\|_{\left(\mathbb{L}^{\infty}(0, \tau; H^{1}(\Omega))\right)^{6}},\|y_m\|_{\left(\mathbb{L}^{\infty}(\Omega_\tau)\right)^{6}}\right\}\! \leq \! \bar{C}.
		\end{aligned}
		\label{4.3}
	\end{equation}
	Thus, $y_m(t)$ is uniformly bounded with respect to $m$. Since $\left(H^{1}(\Omega)\right)^{6}$ is compactly embedded in $H$, we conclude that $\{y_m(t) \mid m \geq 0\}$ is relatively compact in $H$. We also demonstrate that $\{y_m(t) \mid m \geq 0\}$ is equicontinuous in $\mathcal{C}([0,\tau];H)$. Indeed, let $0<t_1<t_2\leq \tau$, then 
	\begin{eqnarray*}
		\Vert y_m(t_2)-y_m(t_1)\Vert &\leq &  \Vert T(t_2)-T(t_1)\Vert_{\mathcal{L}(H)}\Vert y_{0}\Vert\\
		& & + \hspace{0.1cm} \Vert\displaystyle\int_{0}^{t_2}T(t_2-s)F(s,y_m(s))ds-\displaystyle\int_{0}^{t_1}T(t_1-s)F(s,y_m(s))ds\Vert\\
		&\leq & \Vert T(t_2)-T(t_1)\Vert_{\mathcal{L}(H)}\Vert y_{0}\Vert +  \displaystyle\int_{t_1}^{t_2}\Vert T(t_2-s)F(s,y_m(s))\Vert ds\\
		& & + \hspace{0.1cm}
		\displaystyle\int_{0}^{t_1}\Vert T(t_2-s)-T(t_1-s)\Vert_{\mathcal{L}(H)}\Vert F(s,y_m(s))\Vert ds\\
		&\leq& 
		\Vert T(t_2)-T(t_1)\Vert_{\mathcal{L}(H)}\Vert y_{0}\Vert +  \displaystyle\int_{t_1}^{t_2}\Vert F(s,y_m(s))\Vert ds\\
		& & + \hspace{0.1cm}
		\displaystyle\int_{0}^{t_1}\Vert T(t_2-s)-T(t_1-s)\Vert_{\mathcal{L}(H)}\Vert F(s,y_m(s))\Vert ds.
	\end{eqnarray*}
	Since the compactness of $T(t)$ for $t>0$, it follows that $T(t)$ is norm continuous for $t>0$. By the Dominated Convergence Theorem, we get that $\lim\limits_{t_2\to t_1}\Vert y_m(t_2)-y_m(t_1)\Vert=0$ uniformly for $m\in \mathbb{N}$. For $t_1=0$, we have 
	\begin{eqnarray*}
		\Vert y_m(t_2)-y_m(0)\Vert &\leq &  \Vert T(t_2)y_0-y_0\Vert +  \Vert\displaystyle\int_{0}^{t_2}T(t_2-s)F(s,y_m(s))ds\Vert\\
		&\leq & \Vert T(t_2)y_0-y_0\Vert +  \displaystyle\int_{0}^{t_2}\Vert T(t_2-s)F(s,y_m(s))\Vert ds\\
		&\leq& \Vert T(t_2)y_0-y_0\Vert + \displaystyle\int_{0}^{t_2}\Vert F(s,y_m(s))\Vert ds,
	\end{eqnarray*}
	which implies that $\lim\limits_{t_2\to 0}\Vert y_m(t_2)-y_m(0)\Vert=0$ uniformly for $m\in \mathbb{N}$.
	By  Arzelà-Ascoli’s Theorem, we show that $\left\{y_m\mid m\geq 0\right\}$ is compact in $\mathcal{C}([0,\tau];H)$. Hence, there exists a subsequence of $(y_m)_{m\geq 0}$ that we continue to denote by the same index $m\geq 0$ such that $y_m\to y^{*}$ in $\mathcal{C}([0,\tau];H)$ uniformly with respect to $t$. Since the sequence $(By_m)_{m\geq 0}$ is bounded in $H$ (by formula \eqref{4.3}), it follows that there exists a subsequence, also denoted by $(By_m)_{m\geq 0}$, that converges weakly in $\left(\mathbb{L}^{2}(\Omega_\tau)\right)^{6}$. Moreover, for all distributions $\varphi$, we have
	\begin{center}
		$\displaystyle\int_{\Omega_\tau}\varphi By_m = \displaystyle\int_{\Omega_\tau}y_m B\varphi \to \displaystyle\int_{\Omega_\tau}y^{*}B \varphi = \displaystyle\int_{\Omega_\tau}\varphi By^{*}$,
	\end{center}
	which implies that $By_m \rightharpoonup By^*$ weakly in $\left(\mathbb{L}^{2}(\Omega_\tau)\right)^{6}$. From (\ref{4.3}), we get that $\dfrac{\partial y_m}{\partial t}\rightharpoonup \dfrac{\partial y^{*}}{\partial t}$ weakly in $\left(\mathbb{L}^{2}(\Omega_\tau)\right)^{6}$, $y_m\to y^{*}$ weakly in $\left(\mathbb{L}^{2}( 0,\tau; H_0^{2}(\Omega))\right)^{6}$ and $y_m\to y^{*}$ weakly in $\left(\mathbb{L}^{\infty}(0,\tau;H^{1}(\Omega))\right)^{6}$ as $m\to +\infty$. Since the reflexivity of $\mathbb{L}^{2}(\Omega_\tau)$ and $\mathbb{L}^{2}\left([0,\tau]\times \Omega_n\right)$, using the fact that
	\begin{eqnarray*}
		\Vert u_m\Vert^2_{\mathbb{L}^{2}(\Omega_\tau)} &=& \displaystyle\int_{0}^{\tau}\displaystyle\int_{\Omega}  u_m(t,x)^{2} \,dx\, dt\\[5pt]
		& \leq & \text{mes}(\Omega)\tau,
	\end{eqnarray*}
	and 
	\begin{eqnarray*}
		\Vert v_m\Vert^2_{\mathbb{L}^{2}\left([0,\tau]\times \Omega_n\right)}& =& \displaystyle\int_{0}^{\tau}\displaystyle\int_{\Omega} \chi_{\Omega_n}(x) v_m(t,x)^2\,dx\,dt\\[5pt]
		&\leq&  \dfrac{ \text{mes}(\Omega_n)}{n^2}\tau,
	\end{eqnarray*}
	we can affirm that there exist subsequences of $(u_m)_{m\geq 0}$ and $(v_m)_{m\geq 0}$, still denoted by the same index $m\geq 0$ such that 
	\begin{center}
		$u_{m}\rightharpoonup u^*$ \text{ weakly in } $\mathbb{L}^{2}(\Omega_\tau)$ \text{ as } $m\to +\infty$,
	\end{center}
	and 
	\begin{center}
		$v_{m}\rightharpoonup v^*$ \text{ weakly in } $\mathbb{L}^{2}\left([0,\tau]\times \Omega_n\right)$ \text{ as } $m\to +\infty$.
	\end{center}
	Since $\mathcal{U}_{ad}$ is closed and convex, we get that $(u^*,v^*)\in \mathcal{U}_{ad}$.
	Writing 
	\begin{center}
		$y_{i,m}y_{j,m}-y^{*}_iy^{*}_j=(y_{i,m}-y_i^{*})y_{j,m}+y^{*}_i(y_{j,m}-y_j^{*})$,
	\end{center}
	we show that 
	\(F(t,y_m)\to F(t,y^*) \text{ in } \left(\mathbb{L}^{2}(\Omega_\tau)\right)^{6}.\)
	By taking $m \to +\infty$ in \eqref{equation 4.2}, we obtain that $y^{*}$ is the unique solution of the following equation:
	
	\begin{equation}
		\left\{\begin{array}{l}
			\dfrac{\partial y^{*}(t)}{\partial t}=By^{*}(t)+F(t,y^{*}(t)), \quad t\in[0,\tau]\\[8pt]
			y^{*}(0)=y_0,
		\end{array}\right.
	\end{equation}
	corresponding to $(u^{*},v^{*})\in \mathcal{U}_{ad}$. Since $\mathcal{J}(\cdot,\cdot,\cdot)$ is strictly convex (as the sum of a linear part and a quadratic part) and bounded (due to the boundedness of the solutions and controls), it follows by Proposition II.4.5 from \cite{44} that $\mathcal{J}(\cdot,\cdot,\cdot)$ is a proper, weakly lower semi-continuous functional. Therefore,
	\begin{eqnarray*}
		\mathcal{I}\leq \mathcal{J}(y^{*},u^{*},v^{*}) &\leq&  \liminf\limits_{n\to +\infty} \mathcal{J}(y_m,u_m,v_m)\\
		&\leq& \lim\limits_{n\to +\infty}\mathcal{J}(y_m,u_m,v_m)\\
		&=& \mathcal{I}.
	\end{eqnarray*}
	As a consequence $\mathcal{I}=\mathcal{J}(y^{*},u^{*},v^{*})$. The uniqueness of $(y^{*}, u^{*}, v^{*})$ follows from the strict convexity of $\mathcal{J}$ (see \cite[Remark 2.12 and Theorem 2.11]{barbu2012convexity} for more details).
\end{proof}

\section{Necessary Optimality Conditions} \label{section 5}
\noindent

Let $u^*$ and $v^*$ denote the optimal controls of system \eqref{eq 1}-\eqref{equ 2.3}, and set $w^* = (u^*, v^*)^T$. Let $w = (u, v)^T \in \mathcal{U}_{ad}$. In this section, we present the optimality conditions for system \eqref{eq 1}-\eqref{equ 2.3} and provide the characterization of the optimal controls. Let $y^{\epsilon}=(y_1^{\epsilon},\ldots,y_6^{\epsilon})$ and $y^{*}=(y_1^{*},\ldots,y_6^{*})$ be the solutions of \eqref{equation 2} corresponding to $w^{\epsilon}=w^{*}+\epsilon w\in \mathcal{U}_{ad}$ (for $\epsilon>0$ small enough) and $w^{*}$ respectively. Let $M^{*}(t)=\delta y^{*}_3(t)+(1-q)y^{*}_5(t)+\mu y^{*}_4(t)$.  We put
\begin{eqnarray*}
	H(t)&=&\begin{pmatrix}
		\dfrac{\partial F_i(t,y^{*}(t))}{\partial y_j}
	\end{pmatrix}_{1\leq i,j\leq 6}\\ \\
&=&
	\begin{pmatrix}
		-M^{*}(t)-\chi_{\Omega_n}v^{*}(t) & 0 & -\delta y^{*}_1(t) & -\mu y^{*}_1(t) & -(1-q)y^{*}_1(t) & \xi\\
		\chi_{\Omega_n}v^{*}(t) & 0 & 0 & 0 & 0 & 0\\
		M^{*}(t) & 0 & -k+\delta y^{*}_1(t) & \mu y^{*}_1(t) & (1-q)y^{*}_1(t) & 0\\
		0 & 0 & (1-z)k & -\eta & 0 & 0\\
		0 & 0 & zk & (1-p)\eta & -f-u^{*}(t) & 0\\
		0 & 0 & 0 & p\eta & \alpha f+u^{*}(t) & -\xi
	\end{pmatrix},
\end{eqnarray*}
and 
\begin{center}
	$G(t)=\begin{pmatrix}
		\dfrac{\partial F_1(t,y^*(t))}{\partial u} & \dfrac{\partial F_1(t,y^*(t))}{\partial v}\\[8pt]
		\vdots  & \vdots \\[8pt]
		\dfrac{\partial F_6(t,y^*(t))}{\partial u} & \dfrac{\partial F_6(t,y^*(t))}{\partial v}
	\end{pmatrix}
	=\begin{pmatrix}
		0 & -\chi_{\Omega_n}y_1^{*}(t)\\
		0 & \chi_{\Omega_n}y_1^{*}(t)\\
		0 & 0\\
		0 & 0\\
		-y^{*}_5(t) & 0\\
		y^{*}_5(t) & 0
	\end{pmatrix}$.
\end{center}
\begin{theorem}
	The mapping
	\[y:\mathcal{U}_{ad}\to \left(W^{1,2}\left(0,\tau;\mathbb{L}^{2}(\Omega)\right)\cap \mathbb{L}^{2}(0,\tau;H_0^{2}(\Omega))\cap \mathbb{L}^{\infty}(0,\tau;H^{1}(\Omega)) \right)^{6},\]
	is Gateaux differentiable with respect to $w^{*}$. Moreover, for all direction $w\in \mathcal{U}_{ad}$, $Y=y'(w^{*})(w)$ is the unique solution in
	
	\begin{center}
		$\left(W^{1,2}\left(0,\tau;\mathbb{L}^{2}(\Omega)\right)\cap \mathbb{L}^{2}(0,\tau;H_0^{2}(\Omega))\cap \mathbb{L}^{\infty}(0,\tau;H^{1}(\Omega)) \right)^{6}$
	\end{center}
	of the following equation:
	\begin{equation}
		\left\{\begin{array}{l}
			\dfrac{\partial Y(t)}{\partial t}=BY(t)+H(t)Y(t)+G(t)w(t), \quad t\in[0,\tau]\\[10pt]
			Y(0)=0.
		\end{array}\right.
		\label{equ 5.1}
	\end{equation}
\end{theorem}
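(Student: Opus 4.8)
The plan is to realise $Y$ as the limit of the difference quotients $Y^{\epsilon}:=\epsilon^{-1}(y^{\epsilon}-y^{*})$ as $\epsilon\to 0^{+}$ and to identify the limiting equation with \eqref{equ 5.1}. Before passing to the limit I would first settle the well-posedness of \eqref{equ 5.1}. Since $y^{*}$ is uniformly bounded in $\mathbb{L}^{\infty}(\Omega_\tau)$ by estimate \eqref{3.3} and $(u^{*},v^{*})\in\mathcal{U}_{ad}$, every entry of $H(t)$ is multiplication by an $\mathbb{L}^{\infty}(\Omega_\tau)$ function, so $H(t)\in\mathcal{L}(H)$ with $\sup_{t\in[0,\tau]}\Vert H(t)\Vert_{\mathcal{L}(H)}<+\infty$, and $G(t)w\in\left(\mathbb{L}^{\infty}(\Omega_\tau)\right)^{6}$. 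Hence \eqref{equ 5.1} is a linear equation $Y'=(B+H(t))Y+G(t)w$ in which $H(t)$ is a bounded, strongly measurable perturbation of the generator $B$. Existence and uniqueness of a mild solution then follow from the variation-of-constants formula together with Gronwall's inequality, and the additional regularity $Y\in\left(W^{1,2}(0,\tau;\mathbb{L}^{2}(\Omega))\cap \mathbb{L}^{2}(0,\tau;H_0^{2}(\Omega))\cap\mathbb{L}^{\infty}(0,\tau;H^{1}(\Omega))\right)^{6}$ is obtained by repeating the energy estimates from the proof of the existence theorem of Section \ref{section 3} (multiply by $Y_i$, integrate over $\Omega$, apply the Green formula and use the $\mathbb{L}^{\infty}$ bounds on the coefficients).

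For $\epsilon>0$ small enough one has $w^{\epsilon}\in\mathcal{U}_{ad}$, so the existence theorem applies to $y^{\epsilon}$ with the same constant $C$ of \eqref{3.3}, which is independent of the control. Thus $\{y^{\epsilon}\}$ is uniformly bounded in $\mathbb{L}^{\infty}(\Omega_\tau)$ and in the solution space. Subtracting the mild formulations of $y^{\epsilon}$ and $y^{*}$, and using that the Neumann heat semigroups $(T_i(t))_{t\geq 0}$ are $\mathbb{L}^{\infty}$-contractions together with the local Lipschitz continuity of $F$ in its state argument on this bounded set (the control increment contributing only an $O(\epsilon)$ source through $G$), a Gronwall argument yields the Lipschitz estimate $\Vert y^{\epsilon}-y^{*}\Vert_{\mathbb{L}^{\infty}(\Omega_\tau)}\leq C\epsilon$. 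In particular $Y^{\epsilon}$ is uniformly bounded and $y^{\epsilon}\to y^{*}$.

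Because $F(t,\cdot,\cdot)$ is a polynomial of degree two (quadratic in the state, bilinear in state and control), its Taylor expansion about $(y^{*},w^{*})$ is exact:
\begin{equation*}
	F(t,y^{\epsilon},w^{\epsilon})-F(t,y^{*},w^{*})=H(t)(y^{\epsilon}-y^{*})+\epsilon\,G(t)w+Q^{\epsilon}(t),
\end{equation*}
where $Q^{\epsilon}$ is a finite sum of products of two increments, each factor being a component of $y^{\epsilon}-y^{*}$ or of $\epsilon w$. Dividing the equation satisfied by $y^{\epsilon}-y^{*}$ by $\epsilon$ gives
\begin{equation*}
	\frac{\partial Y^{\epsilon}}{\partial t}=BY^{\epsilon}+H(t)Y^{\epsilon}+G(t)w+\epsilon^{-1}Q^{\epsilon},\qquad Y^{\epsilon}(0)=0.
\end{equation*}
By the Lipschitz estimate of the previous step, each term of $\epsilon^{-1}Q^{\epsilon}$ is of the form (uniformly bounded factor)$\times$(increment tending to $0$), whence $\Vert\epsilon^{-1}Q^{\epsilon}\Vert_{(\mathbb{L}^{2}(\Omega_\tau))^{6}}=O(\epsilon)\to 0$; here the $\mathbb{L}^{\infty}(\Omega_\tau)$ bounds are exactly what make these products controllable in $H$.

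Subtracting \eqref{equ 5.1} from the equation for $Y^{\epsilon}$, the difference $Y^{\epsilon}-Y$ solves a linear equation with the same bounded perturbation $H(t)$, zero initial datum, and source $\epsilon^{-1}Q^{\epsilon}$; the variation-of-constants formula and Gronwall's inequality then give $\Vert Y^{\epsilon}-Y\Vert\leq C\int_{0}^{\tau}\Vert\epsilon^{-1}Q^{\epsilon}(s)\Vert\,ds\to 0$, and passing to the limit in the energy estimates upgrades this to convergence in the full solution-space norm. Uniqueness of the limit then identifies $y'(w^{*})(w)=Y$. The main obstacle is precisely the nonlinearity: $F$ is only quadratic and hence not globally Lipschitz on $H$, so both the convergence $y^{\epsilon}\to y^{*}$ and the smallness of $\epsilon^{-1}Q^{\epsilon}$ must be secured through the uniform $\mathbb{L}^{\infty}(\Omega_\tau)$ bounds and the $O(\epsilon)$ Lipschitz estimate, rather than by a direct contraction argument in $H$.
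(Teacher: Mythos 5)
Your proof is correct, and its skeleton coincides with the paper's: both analyze the difference quotient $Y^{\epsilon}=\epsilon^{-1}(y^{\epsilon}-y^{*})$, both lean on the control-independent bound \eqref{3.3} to keep all coefficients uniformly bounded in $\mathbb{L}^{\infty}(\Omega_\tau)$, and both conclude with Gronwall arguments on the mild formulations. The genuine difference is in how the nonlinear increment is decomposed. The paper writes the equation satisfied by $Y^{\epsilon}$ with matrices $H^{\epsilon}(t)$, $G^{\epsilon}(t)$ whose entries are evaluated along $y^{\epsilon}$, proves a uniform $\mathbb{L}^{2}(\Omega_\tau)$ bound on $Y^{\epsilon}$ (whence $y^{\epsilon}\to y^{*}$ at rate $\epsilon$ in $\mathbb{L}^{2}$), and then needs the operator-norm convergences $H^{\epsilon}\to H$ and $G^{\epsilon}\to G$ to run the final Gronwall estimate on $Y^{\epsilon}-Y$. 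You instead expand the quadratic nonlinearity exactly around $(y^{*},w^{*})$, so that $Y^{\epsilon}$ solves the equation with the limiting coefficients $H(t)$, $G(t)$ plus a source $\epsilon^{-1}Q^{\epsilon}$, and the whole problem reduces to showing $\epsilon^{-1}Q^{\epsilon}\to 0$, which your $O(\epsilon)$ Lipschitz estimate $\Vert y^{\epsilon}-y^{*}\Vert_{\mathbb{L}^{\infty}(\Omega_\tau)}\leq C\epsilon$ delivers. Your bookkeeping buys two things: it is exactly consistent, whereas the paper's subtraction, by evaluating both the $M_{\epsilon}$-type entries and the $y_1^{\epsilon}$-type entries at $y^{\epsilon}$, silently discards second-order terms (precisely the terms you quarantine in $Q^{\epsilon}$); and it yields a rate, $\Vert Y^{\epsilon}-Y\Vert=O(\epsilon)$, rather than bare convergence. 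You also supply two points the paper leaves implicit: the well-posedness and regularity of the linearized equation \eqref{equ 5.1} before it is invoked, and the upgrade of the convergence $Y^{\epsilon}\to Y$ from $\mathbb{L}^{2}(\Omega_\tau)$ to the full solution-space norm, which is what the Gateaux differentiability statement actually asserts. The paper's route, for its part, avoids introducing a remainder term and a separate Lipschitz estimate, extracting $y^{\epsilon}\to y^{*}$ directly from the uniform bound on $Y^{\epsilon}$; both routes share the same reliance on \eqref{3.3}, without which neither the coefficient convergence nor the smallness of $\epsilon^{-1}Q^{\epsilon}$ could be controlled in $H$.
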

\begin{proof}
	Let $Y_i^{\epsilon}=\dfrac{y_i^\epsilon-y_i^{*}}{\epsilon}$ for $i=1,2,\ldots,6$ and
	$M_\epsilon=(\delta y^\epsilon_3+(1-q)y^\epsilon_5+\mu y^\epsilon_4)$.
	We denote by $(S^{\epsilon})$ equation \eqref{equation 2} corresponding to $w^{\epsilon}=(
	u^{\epsilon},v^{\epsilon})^{T}$ and $(S^{*})$ equation \eqref{equation 2} corresponding to $w^{*}=(u^{*},v^{*})^{T}$. Subtracting $(S^{*})$ from $(S^{\epsilon})$, we obtain
	\begin{equation}
		\left\{\begin{array}{l}
			\dfrac{\partial Y^{\epsilon}(t)}{\partial t}=BY^{\epsilon}(t)+H^{\epsilon}(t)Y^{\epsilon}(t)+G^{\epsilon}(t)w(t), \quad t\in[0,\tau]\\[10pt]
			Y^{\epsilon}(0)=0,
		\end{array}\right.
		\label{equation 5.1}
	\end{equation}
	where 
	\begin{eqnarray*}
			H^{\epsilon}(t) &= &\begin{pmatrix}
		\dfrac{\partial F_i(t,y^{\epsilon}(t))}{\partial y_j}
		\end{pmatrix}_{1\leq i,j\leq 6}
	\end{eqnarray*}

	\begin{eqnarray*}
		&=& \begin{pmatrix}
			-M_\epsilon(t)-\chi_{\Omega_n}v^{\epsilon}(t) & 0 & -\delta y^{\epsilon}_1(t) & -\mu y^{\epsilon}_1(t) & -(1-q)y^{\epsilon}_1(t) & \xi\\
			\chi_{\Omega_n}v^{\epsilon}(t) & 0 & 0 & 0 & 0 & 0\\
			M_\epsilon(t) & 0 & -k+\delta y^{\epsilon}_1(t) & \mu y^{\epsilon}_1(t) & (1-q)y^{\epsilon}_1(t) & 0\\
			0 & 0 & (1-z)k & -\eta & 0 & 0\\
			0 & 0 & zk & (1-p)\eta & -f-u^{\epsilon}(t) & 0\\
			0 & 0 & 0 & p\eta & \alpha f+u^{\epsilon}(t) & -\xi
		\end{pmatrix},
	\end{eqnarray*}
	and 
	\begin{center}
		$G^{\epsilon}(t)=\begin{pmatrix}
		\dfrac{\partial F_1(t,y^\epsilon(t))}{\partial u} & \dfrac{\partial F_1(t,y^\epsilon(t))}{\partial v}\\[8pt]
		\vdots & \vdots \\[8pt]
		\dfrac{\partial F_6(t,y^\epsilon(t))}{\partial u} & \dfrac{\partial F_6(t,y^\epsilon(t))}{\partial v}
		\end{pmatrix}=\begin{pmatrix}
			0 & -\chi_{\Omega_n}y_1^{\epsilon}(t)\\
			0 & \chi_{\Omega_n}y_1^{\epsilon}(t)\\
			0 & 0\\
			0 & 0\\
			-y^{\epsilon}_5(t) & 0\\
			y^{\epsilon}_5(t) & 0
		\end{pmatrix}$.
	\end{center}
	The solution of equation \eqref{equation 5.1} can be written as follows:
	\begin{equation}
		Y^{\epsilon}(t)=\displaystyle\int_{0}^{t}T(t-s)H^{\epsilon}(s)Y^{\epsilon}(s)ds+\displaystyle\int_{0}^{t}T(t-s)G(s)w(s)ds, \quad t\in[0,\tau].
		\label{equ 5.3}
	\end{equation}
	Since $\Vert y^\epsilon_i\Vert_{\mathbb{L}^{\infty}(\Omega_\tau)}\leq C $ ($C$ is independent of $\epsilon>0$), it follows that the coefficients of matrix $H^{\epsilon}(t)$ and $G^{\epsilon}(t)$ are uniformly bounded with respect to $\epsilon>0$. Using Gronwall's Lemma, we can affirm that there exists $\varGamma>0$ ($\varGamma$ is independent of $\epsilon>0$) such that 
	\begin{center}
		$\Vert Y^{\epsilon}_i\Vert_{\mathbb{L}^{2}(\Omega_\tau)}\leq \varGamma$ \text{ for } $i=1,2,\ldots,6$.
	\end{center}
	Then, 
	\begin{center}
		$\Vert y_i^{\epsilon}-y_i^{*}\Vert_{\mathbb{L}^{2}(\Omega_\tau)}=\epsilon \Vert Y^{\epsilon}_i\Vert_{\mathbb{L}^{2}(\Omega_\tau)}\leq \epsilon \varGamma$ \text{ for } $i=1,2,\ldots,6$.
	\end{center}
	Hence, $y_i^{\epsilon}\to y_i^{*}$ in $\mathbb{L}^{2}(\Omega_\tau)$  as $\epsilon\to 0$ for $i=1,2,\ldots,6$. Let $Y(t)=(Y_1(t), \ldots,\break Y_6(t))$ be the solution of equation \eqref{equ 5.1}. Then,
	\begin{equation}
		Y(t)=\displaystyle\int_{0}^{t}T(t-s)H(s)Y(s)ds+\displaystyle\int_{0}^{t}T(t-s)G(s)Y(s)ds, \quad t\in[0,\tau].
		\label{equ 5.4}
	\end{equation}
	By \eqref{equ 5.3} and \eqref{equ 5.4}, we have
	\begin{eqnarray*}
		Y^{\epsilon}(t)-Y(t)&=&\displaystyle\int_{0}^{t}T(t-s)(H^{\epsilon}(s)-H(s))Y(s)ds\\
		& & +\displaystyle\int_{0}^{t}T(t-s)H^{\epsilon}(s)(Y^{\epsilon}(s)-Y(s))ds\\
		& & + \hspace{0.1cm}\displaystyle\int_{0}^{t}T(t-s)(G^{\epsilon}(s)-G(s))w(s)ds,
	\end{eqnarray*}
	for $t\in[0,\tau]$. Then,
	
	\begin{eqnarray*}
		\Vert Y^{\epsilon}(t)-Y(t)\Vert&\leq& \displaystyle\int_{0}^{t}\Vert H^{\epsilon}(s)-H(s)\Vert_{\mathcal{L}(H)}\Vert Y(s)\Vert ds\hspace{0.1cm} +
	\end{eqnarray*}

\begin{eqnarray*}
		& &  \displaystyle\int_{0}^{t} \Vert G^{\epsilon}(s)-G(s)\Vert_{\mathcal{L}(U,H)}\Vert w(s)\Vert_Uds\\
		& & + \hspace{0.1cm} \displaystyle\int_{0}^{t} \Vert H^{\epsilon}(s)\Vert_{\mathcal{L}(H)}\Vert Y^{\epsilon}(s)-Y(s)\Vert ds,
	\end{eqnarray*}
	for $t\in[0,\tau]$. Because of the boundedness of $y^\epsilon_i$ for $i=1,\ldots,6$, uniformly with respect to $\epsilon > 0$, we show that $H^{\epsilon}(s)$ is uniformly bounded with respect to $\epsilon > 0$. Let $\tilde{C} > 0$ such that $H^{\epsilon}(s) \leq \tilde{C}$ for all $s \in [0, \tau]$. Since 
	\[
	\Vert H^{\epsilon}(s) - H(s) \Vert_{\mathcal{L}(H)} \to 0 \quad \text{and} \quad \Vert G^{\epsilon}(s) - G(s) \Vert_{\mathcal{L}(U, H)} \to 0
	\]
	for almost every $s \in [0, t]$ as $\epsilon \to 0$, for sufficiently small $\epsilon > 0$, we apply the dominated convergence theorem to obtain
	\[
	\Vert Y^{\epsilon}(t) - Y(t) \Vert \leq \dfrac{\epsilon}{2} + \dfrac{\epsilon}{2} + \int_{0}^{t} \tilde{C} \Vert Y^{\epsilon}(s) - Y(s) \Vert \, ds.
	\]
	Using Gronwall's Lemma, we deduce
	
	\begin{center}
		$\Vert Y^{\epsilon}(t) - Y(t) \Vert \leq \epsilon e^{\tilde{C} t}$.
	\end{center}
	Consequently, the convergence $Y^{\epsilon} \to Y$ in $\mathbb{L}^{2}(\Omega_\tau)$ as $\epsilon \to 0$ follows from the fact that
	\[
	\Vert Y^{\epsilon} - Y \Vert_{\mathbb{L}^{2}(\Omega_\tau)} \leq \dfrac{\epsilon}{\sqrt{2 \tilde{C}}} e^{\tilde{C} \tau}.
	\]
\end{proof}

Let 
\(\rho(x)=\left(\chi_{\Omega_n}(x)\rho_1,
0,
\rho_3,
\rho_4,
\rho_5,
0\right)^{T}\text{ for } x\in \Omega.\) The following theorem provides the characterization of the optimal controls.
\begin{theorem} Let $(u^{*},v^{*})$ denote the optimal controls for the system \eqref{eq 1}-\eqref{equ 2.3}, and let $y^{*}$ be the optimal state of equation \eqref{equation 2} corresponding to $(u^{*},v^{*})$. Then,
	\[
	u^{*}(t,x) = \max\left(0, \min\left(1, \dfrac{y^{*}_5(t,x)(p_5(t,x) - p_6(t,x))}{\sigma_1}\right)\right),
	\]
	and
	\[
	v^{*}(t,x) = \max\left(0, \min\left(\dfrac{1}{n}, \frac{y_1^{*}(t,x)(p_1(t,x) - p_2(t,x))}{\sigma_2}\right)\right),
	\]
	where $P = \left(p_1, p_2, p_3, p_4, p_5, p_6\right)$ is the solution of the following adjoint equation:
	\begin{equation}\label{equ 5.5}
	\left\{
	\begin{array}{l}
	-\dfrac{\partial P(t,x)}{\partial t} - BP(t,x) - H^{*}(t,x)P(t,x) = \rho(x), \quad t \in [0,\tau], \\[8pt]
	P(\tau,x) = 0, \quad x \in \Omega,
	\end{array}
	\right.
	\end{equation}
	where $H^{*}(t,x) = H^{*}(t)(x)$ for $(t,x) \in \Omega_\tau$.
\label{theorem 5.2}
\end{theorem}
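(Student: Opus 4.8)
The plan is to combine the first-order optimality inequality on the convex set $\mathcal{U}_{ad}$ with an adjoint-state duality argument. Since $\mathcal{U}_{ad}$ is convex, for any $w=(u,v)\in\mathcal{U}_{ad}$ the combination $w^{*}+\epsilon(w-w^{*})$ stays in $\mathcal{U}_{ad}$ for $\epsilon\in[0,1]$, and minimality of $\mathcal{J}$ at $w^{*}$ gives $\frac{d}{d\epsilon}\mathcal{J}\big|_{\epsilon=0^{+}}\geq 0$, i.e. $\mathcal{J}'(w^{*})(w-w^{*})\geq 0$ for every $w\in\mathcal{U}_{ad}$. First I would compute this derivative. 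By the Gateaux differentiability established just above, $Y:=y'(w^{*})(w-w^{*})$ solves the linearized system \eqref{equ 5.1} with $w$ replaced by $w-w^{*}$; differentiating $\mathcal{J}$ (with the control penalties read as squared $\mathbb{L}^{2}$-norms) and using the definition of $\rho$ yields
\begin{equation*}
\mathcal{J}'(w^{*})(w-w^{*})=\int_{0}^{\tau}\!\int_{\Omega}\rho^{T}Y\,dx\,dt+\sigma_{1}\!\int_{\Omega_\tau}\! u^{*}(u-u^{*})+\sigma_{2}\!\int v^{*}(v-v^{*}).
\end{equation*}

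Next I would introduce the adjoint state $P=(p_{1},\ldots,p_{6})$ as the unique solution of the backward system \eqref{equ 5.5}, where $H^{*}(t,x)$ is the transpose of $H(t,x)$. Its well-posedness follows from the time reversal $t\mapsto\tau-t$, which converts \eqref{equ 5.5} into a forward Cauchy problem governed by $B$ (whose generation property is established in Section~\ref{section 3}) perturbed by the bounded, measurable zeroth-order term $H^{*}$, so the argument used for \eqref{equation 2} applies. The central step is the duality identity
\begin{equation*}
\int_{0}^{\tau}\!\int_{\Omega}\rho^{T}Y\,dx\,dt=\int_{0}^{\tau}\!\int_{\Omega}P^{T}G(t)(w-w^{*})\,dx\,dt,
\end{equation*}
obtained by substituting $\rho=-\partial_{t}P-BP-H^{*}P$ and integrating by parts. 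The time integration produces the boundary terms $P(\tau)^{T}Y(\tau)$ and $P(0)^{T}Y(0)$, both of which vanish because $P(\tau)=0$ and $Y(0)=0$; the spatial pairing uses the self-adjointness of $B$ under the Neumann conditions (Green's formula, with no boundary contribution) so that $\int_{\Omega}(BP)^{T}Y=\int_{\Omega}P^{T}BY$; and the zeroth-order terms cancel via $(H^{*}P)^{T}Y=P^{T}HY$. What survives is the pairing of $P$ with $\partial_{t}Y-BY-H\,Y=G(t)(w-w^{*})$.

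Then I would make the right-hand side explicit. From the form of $G$ one has $P^{T}G(t)(w-w^{*})=y_{5}^{*}(p_{6}-p_{5})(u-u^{*})+\sum_{i=1}^{n}\chi_{\omega_i}y_{1}^{*}(p_{2}-p_{1})(v-v^{*})$, so the optimality inequality becomes
\begin{equation*}
\int_{\Omega_\tau}\!\big[y_{5}^{*}(p_{6}-p_{5})+\sigma_{1}u^{*}\big](u-u^{*})+\int\big[\sum_{i=1}^{n}\chi_{\omega_i}y_{1}^{*}(p_{2}-p_{1})+\sigma_{2}v^{*}\big](v-v^{*})\geq 0,
\end{equation*}
for all $(u,v)\in\mathcal{U}_{ad}$. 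Choosing variations that perturb only $u$ (resp. only $v$) decouples the two terms, and a pointwise (fundamental-lemma) argument localizes each to an a.e. scalar variational inequality on $[0,1]$ (resp. $[0,\frac{1}{n}]$). Solving these by projection onto the respective intervals gives $u^{*}=\max(0,\min(1,y_{5}^{*}(p_{5}-p_{6})/\sigma_{1}))$ and $v^{*}=\max(0,\min(\frac{1}{n},\sum_{i=1}^{n}\chi_{\omega_i}y_{1}^{*}(p_{1}-p_{2})/\sigma_{2}))$, as claimed.

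I expect the main obstacle to be the rigorous justification of the duality identity: establishing existence and enough regularity of the adjoint $P$ for the backward system, and verifying that all boundary terms (in both the temporal integration by parts and the spatial Green's formula) genuinely vanish, which relies on $Y(0)=0$, $P(\tau)=0$, the Neumann conditions shared by $Y$ and $P$, and the self-adjointness $B^{*}=B$.
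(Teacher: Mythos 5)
Your proposal is correct and follows essentially the same route as the paper's proof: differentiate $\mathcal{J}$ using the Gateaux differentiability of the control-to-state map, pair $\rho$ with the linearized state $Y$, convert via the backward adjoint system and integration by parts (using $Y(0)=0$, $P(\tau)=0$, and self-adjointness of $B$) into $\int_0^\tau\langle G^{*}P,\,w-w^{*}\rangle\,dt$, and then exploit the first-order optimality inequality over $\mathcal{U}_{ad}$. If anything, your pointwise variational-inequality/projection argument at the end, and your remark on well-posedness of the adjoint equation by time reversal, make explicit two steps the paper compresses into ``by standard argument varying $w$.''
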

\begin{proof}
	Let $\varepsilon>0$, $w^{*}=(u^{*},v^{*})^{T}$ and $w^{\varepsilon}=(u^{\epsilon},v^{\epsilon})^{T}=w^{*}+\varepsilon h$ for $h=(h_1,h_2)^{T}$. 
	Then,
	\begin{eqnarray*}
		\mathcal{J}'(y^{*},u^{*},v^{*})(h)
		&=& \lim\limits_{\varepsilon\to 0}\dfrac{\mathcal{J}(y^{\varepsilon},u^{\varepsilon},v^{\varepsilon})-\mathcal{J}(y^{*},u^{*},v^{*})}{\varepsilon}\\
		&=& \lim\limits_{\varepsilon\to 0}\left[\displaystyle\int_{0}^{\tau}\displaystyle\int_{\Omega}\rho_1\chi_{\Omega_n}(x)\left(\dfrac{y^{\varepsilon}_1(t,x)-y^{*}_1(t,x)}{\varepsilon}\right)\,dx\,dt\right.\\
		& & + \hspace{0.1cm}\left.\displaystyle\int_{0}^{\tau}\displaystyle\int_{\Omega}\sum\limits_{i=3}^{5}\left(\rho_i\left(\dfrac{y^{\varepsilon}_i(t,x)-y^{*}_i(t,x)}{\varepsilon}\right)\right)\,dx\,dt\right.\\
		& & +\hspace{0.1cm}\left.\dfrac{\sigma_1}{2}\displaystyle\int_{0}^{\tau}\displaystyle\int_{\Omega}\left(\dfrac{(u^{\varepsilon}(t,x))^{2}-(u^{*}(t,x))^{2}}{\varepsilon}\right)\,dx\,dt\right.\\
		& & + \hspace{0.1cm}\left.\dfrac{\sigma_2}{2}\displaystyle\int_{0}^{\tau}\displaystyle\int_{\Omega}\chi_{\Omega_n}(x)\left(\dfrac{(v^{\varepsilon}(t,x))^{2}-(v^{*}(t,x))^{2}}{\varepsilon}\right)\,dx\,dt\right]\\
		&=& \lim\limits_{\varepsilon\to 0}\left[\displaystyle\int_{0}^{\tau}\displaystyle\int_{\Omega}\rho_1\chi_{\Omega_n}(x)\left(\dfrac{y^{\varepsilon}_1(t,x)-y^{*}_1(t,x)}{\varepsilon}\right)\,dx\,dt\right.\\
		& & + \hspace{0.1cm}\left.\displaystyle\int_{0}^{\tau}\displaystyle\int_{\Omega}\sum\limits_{i=3}^{5}\left(\rho_i\left(\dfrac{y^{\varepsilon}_i(t,x)-y^{*}_i(t,x)}{\varepsilon}\right)\right)\,dx\,dt\right.
	\end{eqnarray*}
\begin{eqnarray*}
		& & +\hspace{0.1cm}\left.\dfrac{\sigma_1}{2}\displaystyle\int_{0}^{\tau}\displaystyle\int_{\Omega}\left(2u^{*}(t,x)h_1+\varepsilon h_1^2\right)\,dx\,dt\right.\\
		& & + \hspace{0.1cm}\left.\dfrac{\sigma_2}{2}\displaystyle\int_{0}^{\tau}\displaystyle\int_{\Omega}\chi_{\Omega_n}(x)\left(2v^{*}(t,x)h_2+\varepsilon h_2^2\right)\,dx\,dt\right]\\
			&=& \displaystyle\int_{0}^{\tau}\displaystyle\int_{\Omega}\rho_1\chi_{\Omega_n}(x)Y_1(t,x)\,dx\,dt+\displaystyle\int_{0}^{\tau}\displaystyle\int_{\Omega} \sum\limits_{i=3}^{5}\rho_iY_i(t,x)\,dx\,dt\\
		& & + \hspace{0.1cm}\displaystyle\int_{0}^{\tau}\displaystyle\int_{\Omega}\left(\sigma_1u^{*}(t,x)h_1(t,x)+\sigma_2\chi_{\Omega_n}(x)v^{*}(t,x)h_2(t,x)\right)\,dx\,dt\\
		&=& \displaystyle\int_{0}^{\tau}\langle \rho,Y\rangle_{H}dt+\displaystyle\int_{0}^{\tau}\langle \bar{w}^{*},h\rangle_U \,dt,
	\end{eqnarray*}
	with $\bar{w}^{*}=\begin{pmatrix}
		\sigma_1 u^{*}\\
		\sigma_2 \chi_{\Omega_n} v^{*}
	\end{pmatrix}$. Let $w\in \mathcal{U}_{ad}$, we take $h=w-w^{*}$, then
	\begin{center}
		$\mathcal{J}'(y^{*},u^{*},v^{*})(w-w^{*})
		= \displaystyle\int_{0}^{\tau}\langle \rho,Y\rangle_{H}dt+\displaystyle\int_{0}^{\tau}\langle \bar{w}^{*},w-w^{*}\rangle_U\, dt.$
	\end{center}
	We have 
	\begin{eqnarray*}
		\displaystyle\int_{0}^{\tau}\langle\rho,Y\rangle_{H}\,dt &=& \displaystyle\int_{0}^{\tau}\langle -\dfrac{\partial P}{\partial t}-BP-H^{*}P,Y\rangle_{H}\,dt\\
		&=& \displaystyle\int_{0}^{\tau}\langle P,\dfrac{\partial Y}{\partial t}-BY-HY\rangle_{H}\,dt\\
		&=& \displaystyle\int_{0}^{\tau}\langle P,G(w-w^{*})\rangle_{H}\,dt\\
		&=& \displaystyle\int_{0}^{\tau}\langle G^{*}P,w-w^{*}\rangle_U\,dt.
	\end{eqnarray*}
	Since $\mathcal{J}$ is convex, it follows that 
	\begin{center}
		$\mathcal{J}'(y^{*},u^{*},v^{*})(w-w^{*})=\displaystyle\int_{0}^{\tau}\langle G^{*}P+\bar{w}^{*},w-w^{*}\rangle_U\,dt\geq 0$.
	\end{center}
	By standard argument varying $w$, we obtain 
	\begin{eqnarray*}
		\bar{w}^{*}&=&-G^{*}P\\
		&=&- \begin{pmatrix}
			0 & 0 & 0 & 0 & -y^{*}_5 & y^{*}_5\\
			-\chi_{\Omega_n}y_1^{*} &  \chi_{\Omega_n}y_1^{*} & 0 & 0 & 0 & 0
		\end{pmatrix}\begin{pmatrix}
		\vspace{-0.2cm}
			p_1 \\ 
			\vspace{-0.2cm}
			\vdots\\
			p_6
		\end{pmatrix}.
	\end{eqnarray*}
	Then, 
	\begin{center}
		$	\begin{pmatrix}
			u^{*}\\
			v^{*}
		\end{pmatrix}=\begin{pmatrix}
			\dfrac{y^{*}_5(p_5-p_6)}{\sigma_1}\\[8pt]
			\dfrac{y_1^{*}(p_1-p_2)}{\sigma_2 }
		\end{pmatrix}$.
	\end{center}
	As $(u^{*},v^{*})\in \mathcal{U}_{ad}$, we write
	\[u^{*}(t,x)=\max\left(0,\min\left(1,\dfrac{y^{*}_5(t,x)(p_5(t,x)-p_6(t,x))}{\sigma_1}\right)\right),\]
	and 
	\[v^{*}(t,x)=\max\left(0,\min\left(\dfrac{1}{n},\dfrac{y_1^{*}(t,x)(p_1(t,x)-p_2(t,x))}{\sigma_2}\right)\right),\]
	for $t\in[0,\tau]$ and $x\in \Omega$.
\end{proof}
\section{Numerical simulation and some discussions} \label{section 6}
\noindent

This section applies our theoretical framework to the COVID-19 pandemic and demonstrates its effectiveness through computational simulations. For this purpose, the system of equations given by \eqref{eq 1}-\eqref{equ 2.3} is numerically solved using the explicit finite difference method. This yields approximations for the state variables, denoted by \( y_i^* \) for \( i=1,\ldots,6 \). Subsequently, the adjoint system \eqref{equ 5.5} is discretized and solved using the same numerical scheme. The resulting adjoint variables, \( p_i \) for \( i=1,\ldots,6 \), are employed to iteratively update the control functions according to the conditions specified in Theorem \ref{theorem 5.2}. The iterative process terminates when the difference between the current and previous values for the control variables is within an acceptable error range. Our investigations use a set of parameters from prior studies \cite{Li,Riou}, with some values specifically chosen for COVID-19. For simplicity, we set \( n = 1 \), as the methodology remains consistent for \( n \geq 2 \). The parameter values are summarized in Table \ref{table1_Num}.
\begin{table}[h!]
	\centering
	\caption{Model parameters}\label{table1_Num}
		\begin{tabular}{ccc} 
			\hline
			Parameter & \hspace{1.5cm} & Value\\
			\hline
			$\eta$ &  & 0.3/day\\
			$q$ &  & 0.9995\\ 
			$\xi$ &  & 0.001\\
			$\delta$ & & $10^{-5}$\\ 
			$\mu$ &  & $10^{-5}$\\ 
			$z$ &  & 0.1\\ 
			\hline
	\end{tabular}
	\hspace{0.5cm}
	\begin{tabular}{ccc} 
		\hline
		Parameter & \hspace{1.5cm} & Value\\
		\hline 
		$k$ &  & 0.54/day\\
		$f$ &  & 0.3/day\\ 
		$\tau$ &  & 20 days\\
		$\alpha$ & & 0.995\\
		$D_i$ & & 0.001\\
		$p$ &  & 0.02\\
		\hline
	\end{tabular}
\end{table}

\normalsize
 For the initial state variables, we take
\begin{equation*}
\begin{aligned}
S_0(x) &= 4000 \sin (\pi x) + 8000 \left(1 - \frac{1}{\pi}\right), \quad & x \in (0,1),\\
Q_0(x) &= 0,\quad &  x \in (0,1),\\
E_0(x) &= 100 \exp(x) + 282.2, \quad &  x \in (0,1),\\
A_0(x) &= 500 \cos (\pi x) + 500, \quad & x \in (0,1),\\
I_0(x) &= 500 \cos (\pi x) + 500, \quad &  x \in (0,1),\\
R_0(x) &= 0, \quad &  x \in (0,1).
\end{aligned}
\end{equation*}

Figures \ref{fig2} to \ref{fig13} illustrate the variation in the number of individuals in each group, both with and without controls. Figure \ref{fig14} depicts the absolute difference between the total populations with and without controls. Additionally, Figures \ref{fig15} and \ref{fig16} present the evolution of treatment and quarantine controls, respectively.

Figure \ref{fig2} illustrates the evolution of susceptible individuals in the absence of control measures. It is evident that their numbers decline sharply, reaching zero within approximately one week. This rapid depletion of the susceptible population indicates that these individuals are transitioning into the exposed category, subsequently contributing to an increase in asymptomatic and infected cases. The dynamics change significantly when control measures are implemented. As shown in Figures \ref{fig3} and \ref{fig5}, a portion of the susceptible population is moved into the quarantined group. This intervention reduces the number of susceptible individuals transitioning directly to the exposed group. Notably, within five days, some of the susceptible individuals still join the exposed group, although the overall impact is mitigated compared to the scenario without controls.
\begin{figure}[h!]
	\centering
	\begin{overpic}[width=\textwidth]{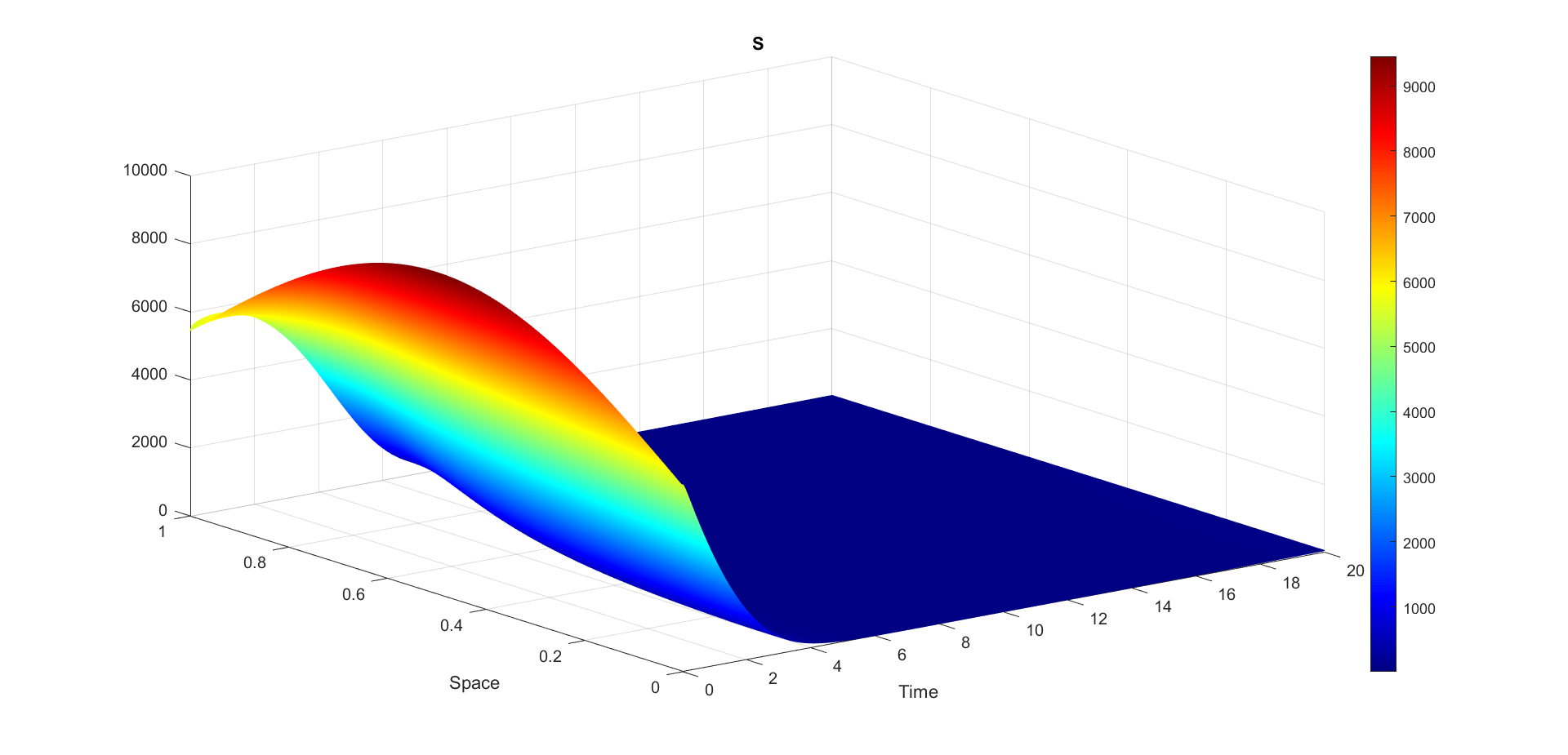}
		\put(62.3, 34.4){%
			\subfloat{%
				\begin{minipage}{0.5\textwidth}
					\includegraphics[width=0.5\textwidth]{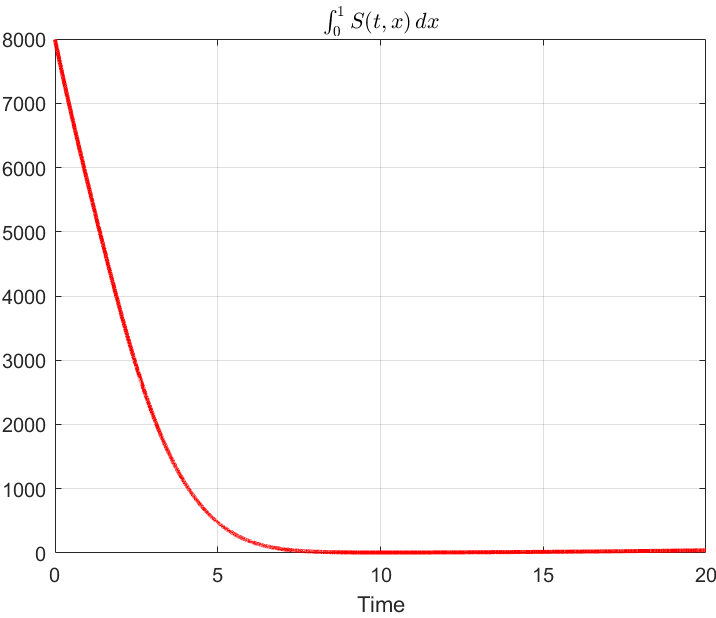}
				\end{minipage}
			}
		}
	\end{overpic}
	\caption{Changes in the Susceptible Group Without Control Actions.}
	\label{fig2}
\end{figure}
In the scenario without controls, Figure \ref{fig6} demonstrates that the number of exposed individuals begins to rise from the first day. Initially, there are only 500 exposed individuals, but this number escalates dramatically, surpassing 2500 cases within approximately ten days. Following this peak, the number of exposed individuals starts to decline as they progress to the infected and asymptomatic categories. Conversely, when control measures are applied, as depicted in Figure \ref{fig7}, the number of exposed individuals still increases, but the growth is significantly curtailed. The maximum number of exposed individuals does not exceed 1000 cases, which is a stark contrast to the uncontrolled scenario. This controlled increase is expected, given that direct control measures targeting the exposed group are not in place. The presence of controls indirectly influences the exposed population by reducing the influx from the susceptible group.
\begin{figure}[h!]
	\centering
	\begin{overpic}[width=\textwidth]{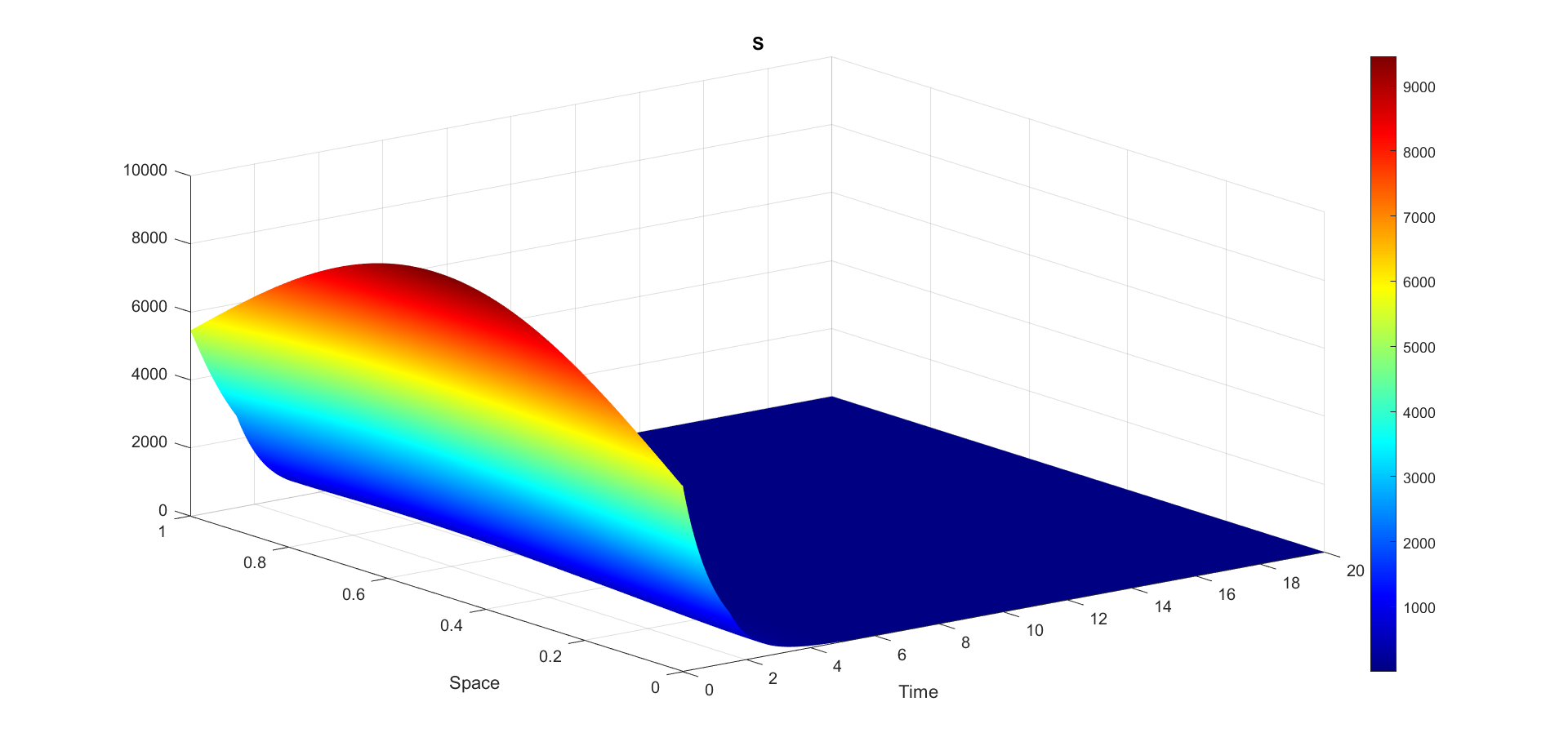}
		\put(62.3, 34.4){%
			\subfloat{%
				\begin{minipage}{0.5\textwidth}
					\includegraphics[width=0.5\textwidth]{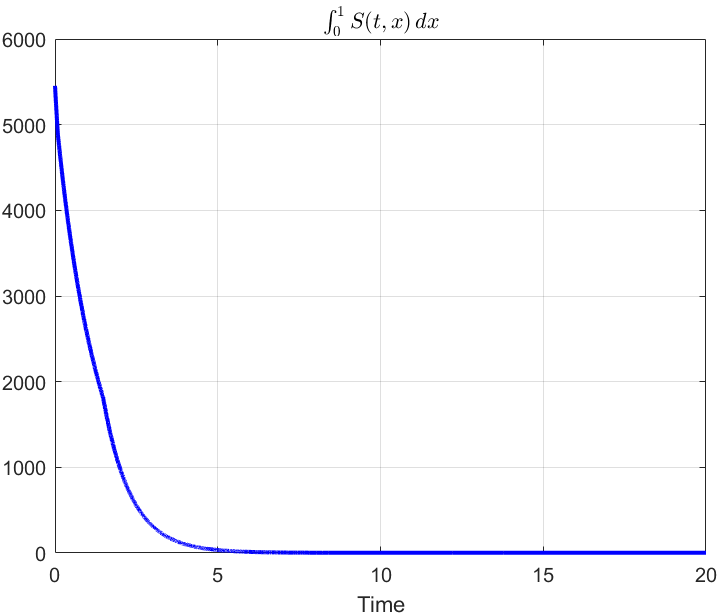}
				\end{minipage}
			}
		}
	\end{overpic}
	\caption{Changes in the Susceptible Group in the Presence of Controls.}
	\label{fig3}
\end{figure}

\begin{figure}[h!]
	\centering
	\includegraphics[scale=0.28]{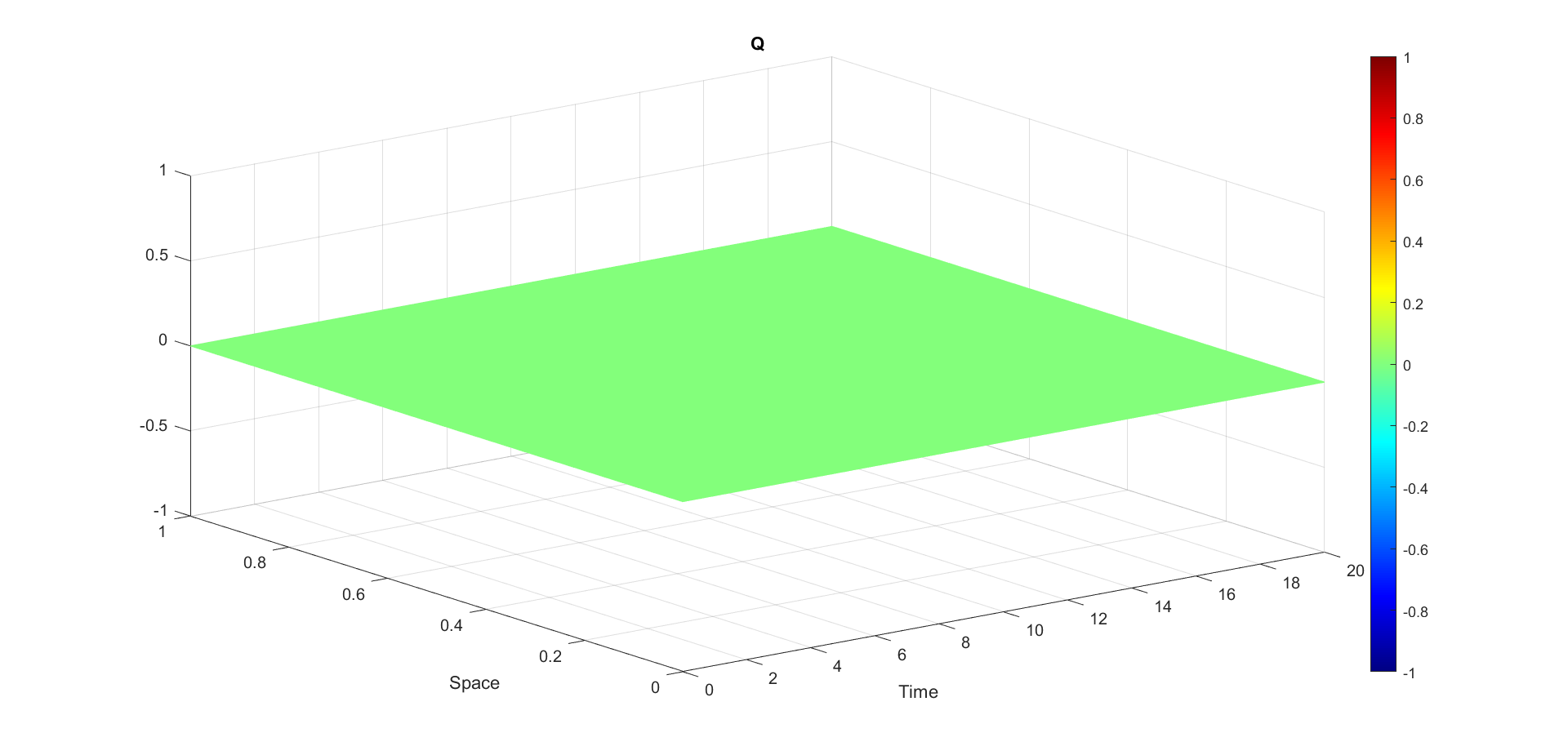}
	\caption{Changes in the Quarantined Group Without Control Actions.}
	\label{fig4}
\end{figure}
\begin{figure}[h!]
	\centering
	\includegraphics[scale=0.28]{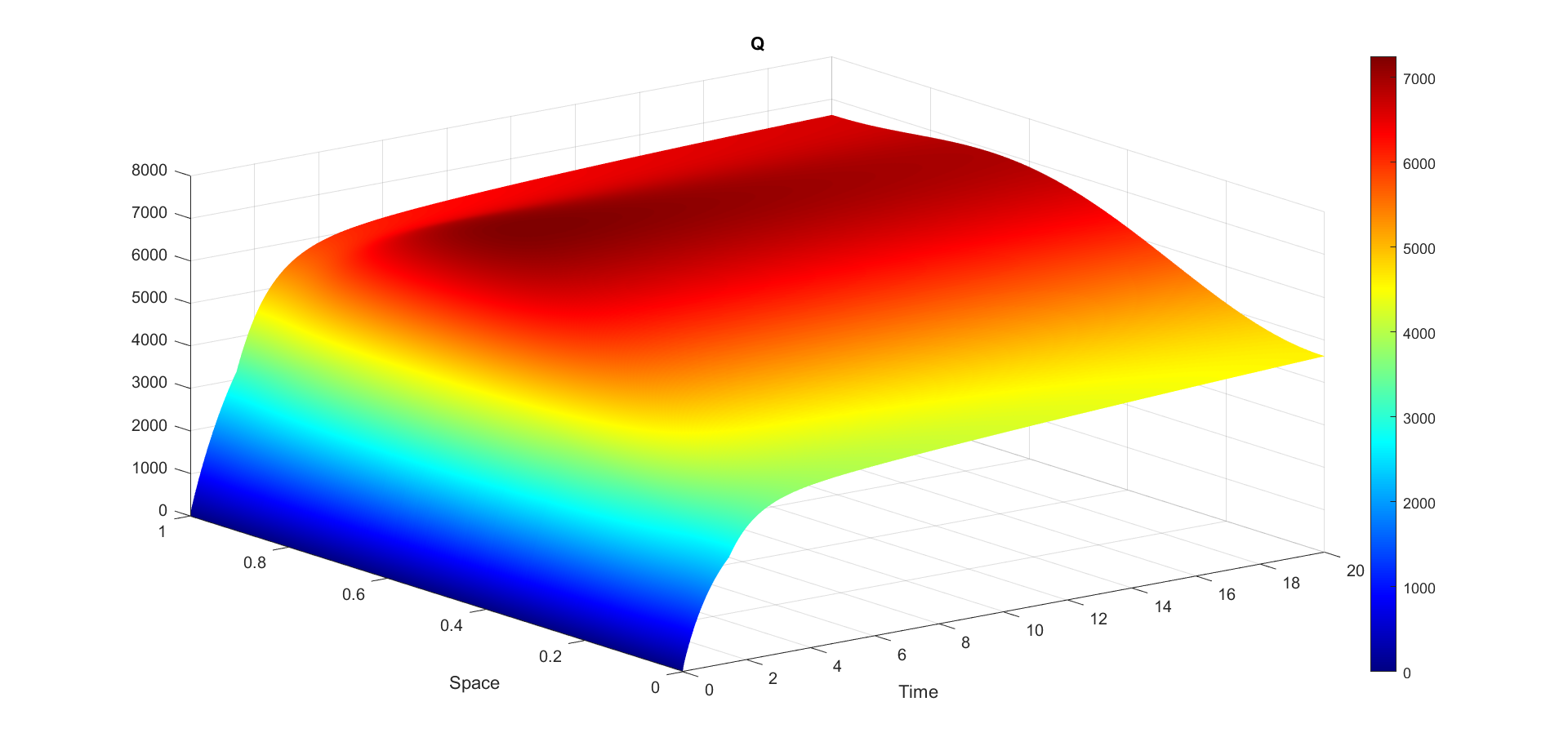}
	\caption{Changes in the Quarantined Group in the Presence of Controls.}
	\label{fig5}
\end{figure}

Figure \ref{fig8} shows a significant increase in the number of asymptomatic individuals in the absence of control measures. Specifically, the number of asymptomatic cases rises sharply, reaching approximately 3000 persons within the first and second weeks. This substantial increase can be attributed to the lack of intervention, allowing the virus to spread unchecked among the population. The growth in asymptomatic individuals is particularly concerning because these individuals, while not exhibiting symptoms, are still capable of transmitting the virus to others. This hidden spread contributes to a higher number of infected individuals due to the transmission rate $(1-p)\eta$.
\begin{figure}[h]
	\centering
	\begin{overpic}[width=\textwidth]{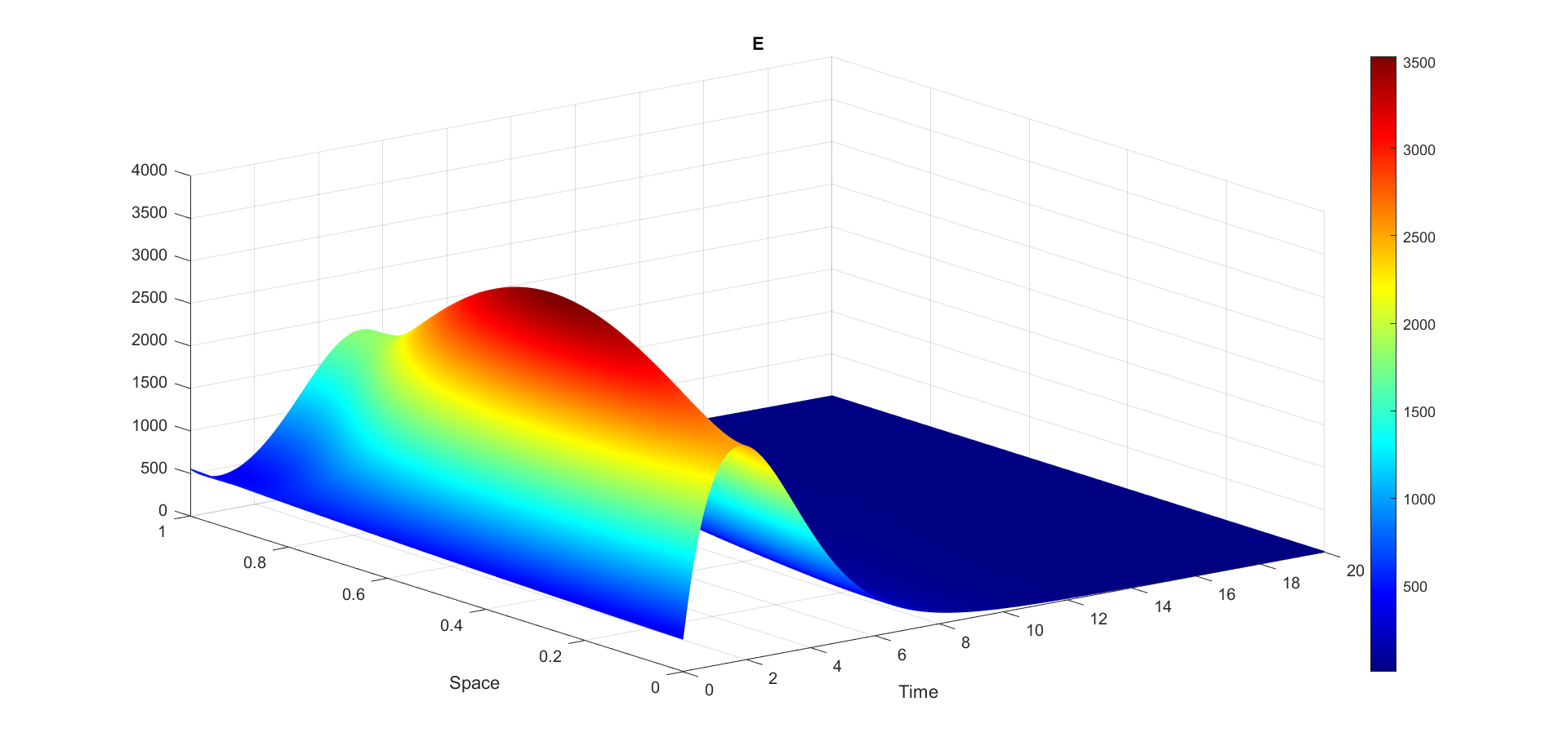}
		\put(62.3, 34.4){%
			\subfloat{%
				\begin{minipage}{0.5\textwidth}
					\includegraphics[width=0.5\textwidth]{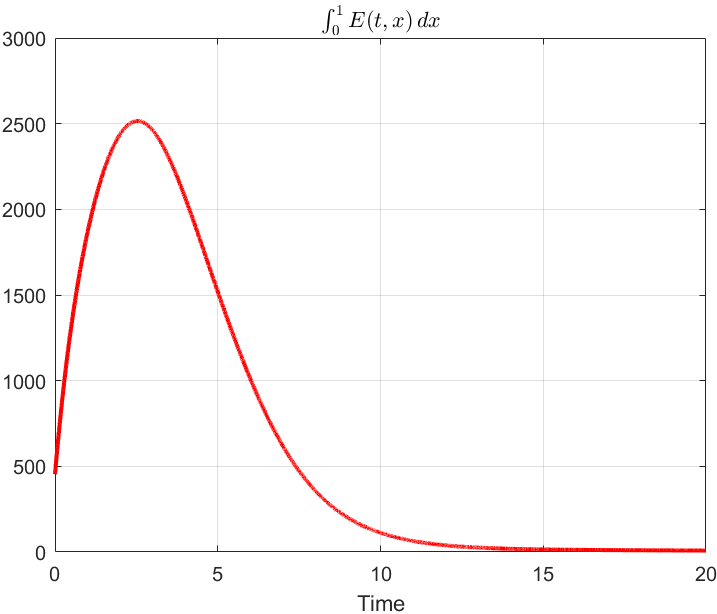}
				\end{minipage}
			}
		}
	\end{overpic}
	\caption{Changes in the Exposed Group Without Control Actions.}
	\label{fig6}
\end{figure}
\begin{figure}[h!]
	\centering
	\begin{overpic}[width=\textwidth]{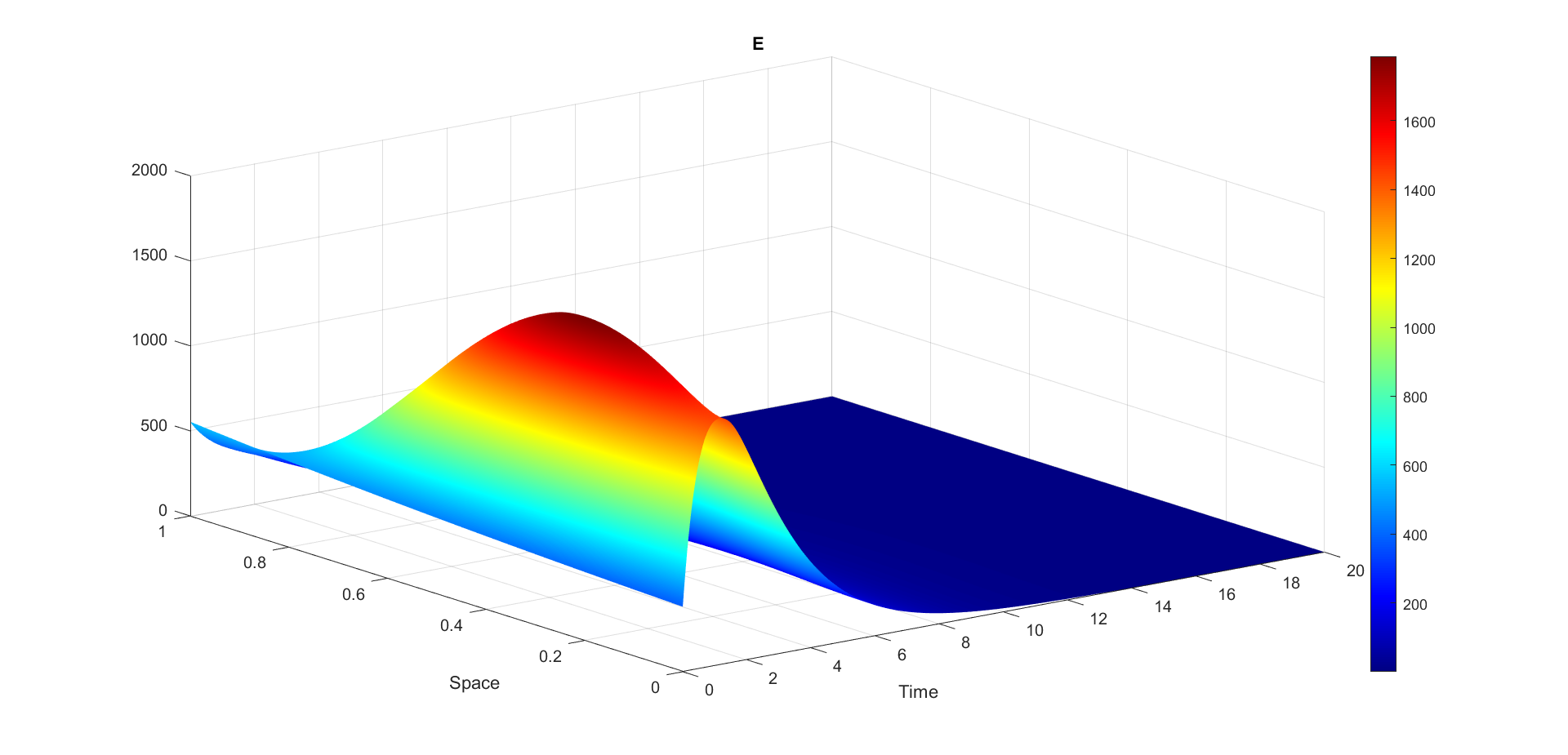}
		\put(62.3, 34.4){%
			\subfloat{%
				\begin{minipage}{0.5\textwidth}
					\includegraphics[width=0.5\textwidth]{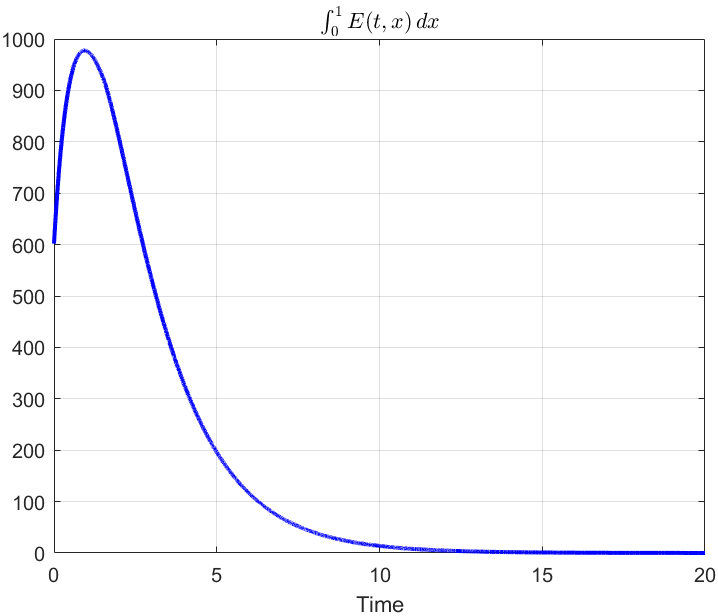}
				\end{minipage}
			}
		}
	\end{overpic}
	\caption{Changes in the Exposed Group in the Presence of Controls.}
	\label{fig7}
\end{figure}
\begin{figure}[h!]
	\centering
	\begin{overpic}[width=\textwidth]{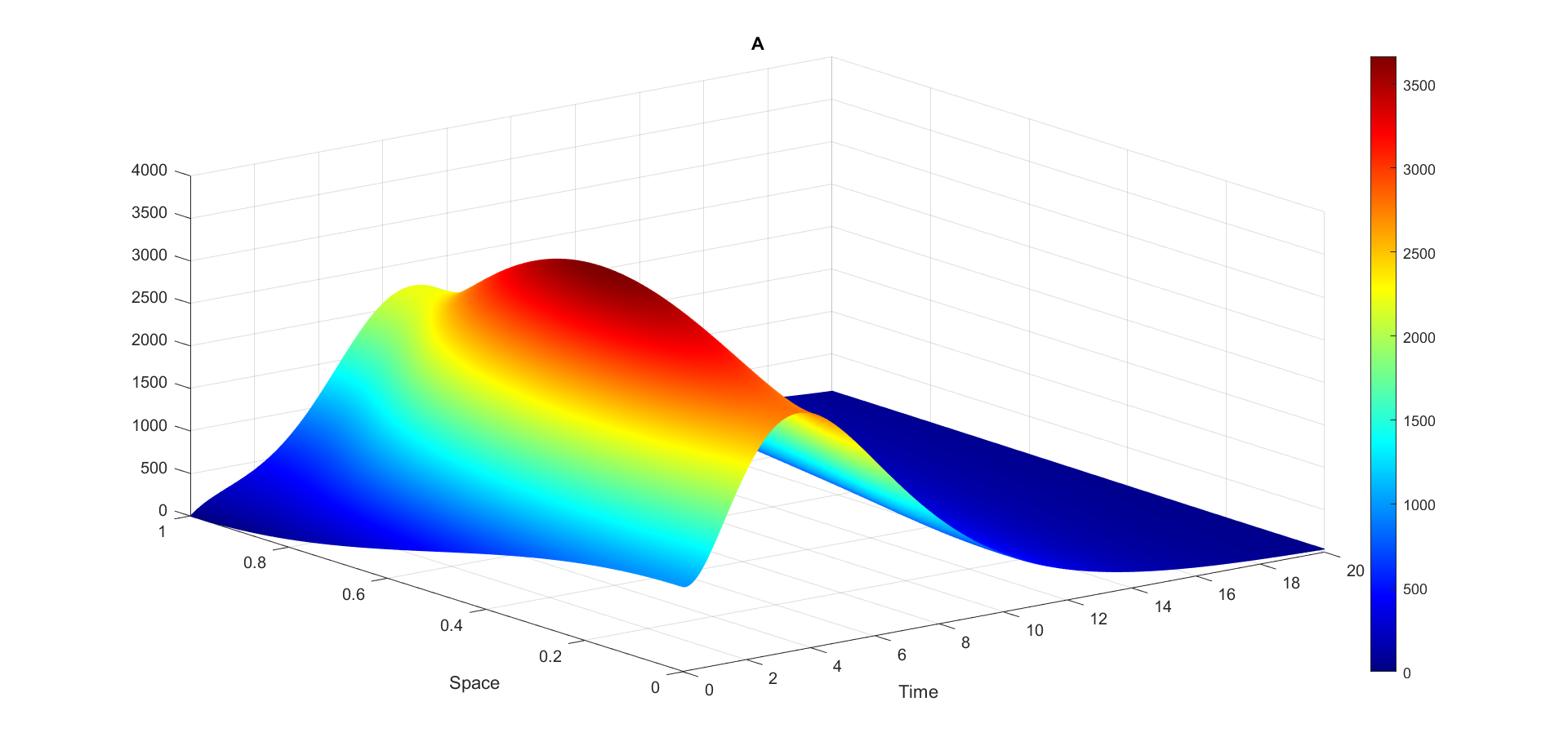}
		\put(62.3, 34.4){%
			\subfloat{%
				\begin{minipage}{0.5\textwidth}
					\includegraphics[width=0.5\textwidth]{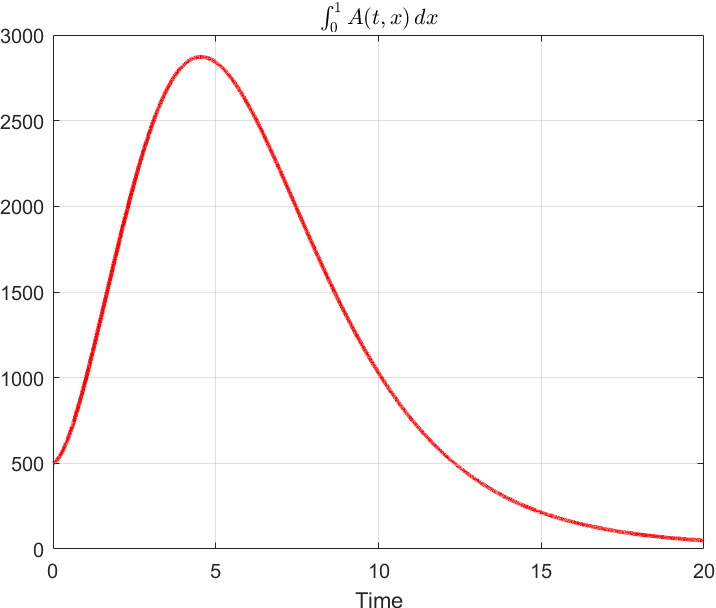}
		\end{minipage}}}
	\end{overpic}
	\caption{Changes in the Asymptomatic Group Without Control Actions.}
	\label{fig8}
\end{figure}
In contrast, when control measures are implemented, as illustrated in Figure \ref{fig9}, the number of asymptomatic individuals is significantly lower, not exceeding 1000 persons. This marked difference underscores the effectiveness of the control strategies in mitigating the spread of the virus. Despite these measures, some increase in asymptomatic cases is still observed, which is natural given that the controls do not directly target this group. The increase in asymptomatic individuals in the presence of controls can also be attributed to the initial pool of exposed individuals transitioning to the asymptomatic category. While the controls effectively reduce the rate at which new individuals become exposed and subsequently asymptomatic, they cannot entirely prevent the existing exposed individuals from progressing through the stages of infection. However, the overall impact is significantly mitigated, demonstrating the critical role of early and effective intervention in controlling the spread of the virus.
\begin{figure}[h!]
	\centering
	\begin{overpic}[width=\textwidth]{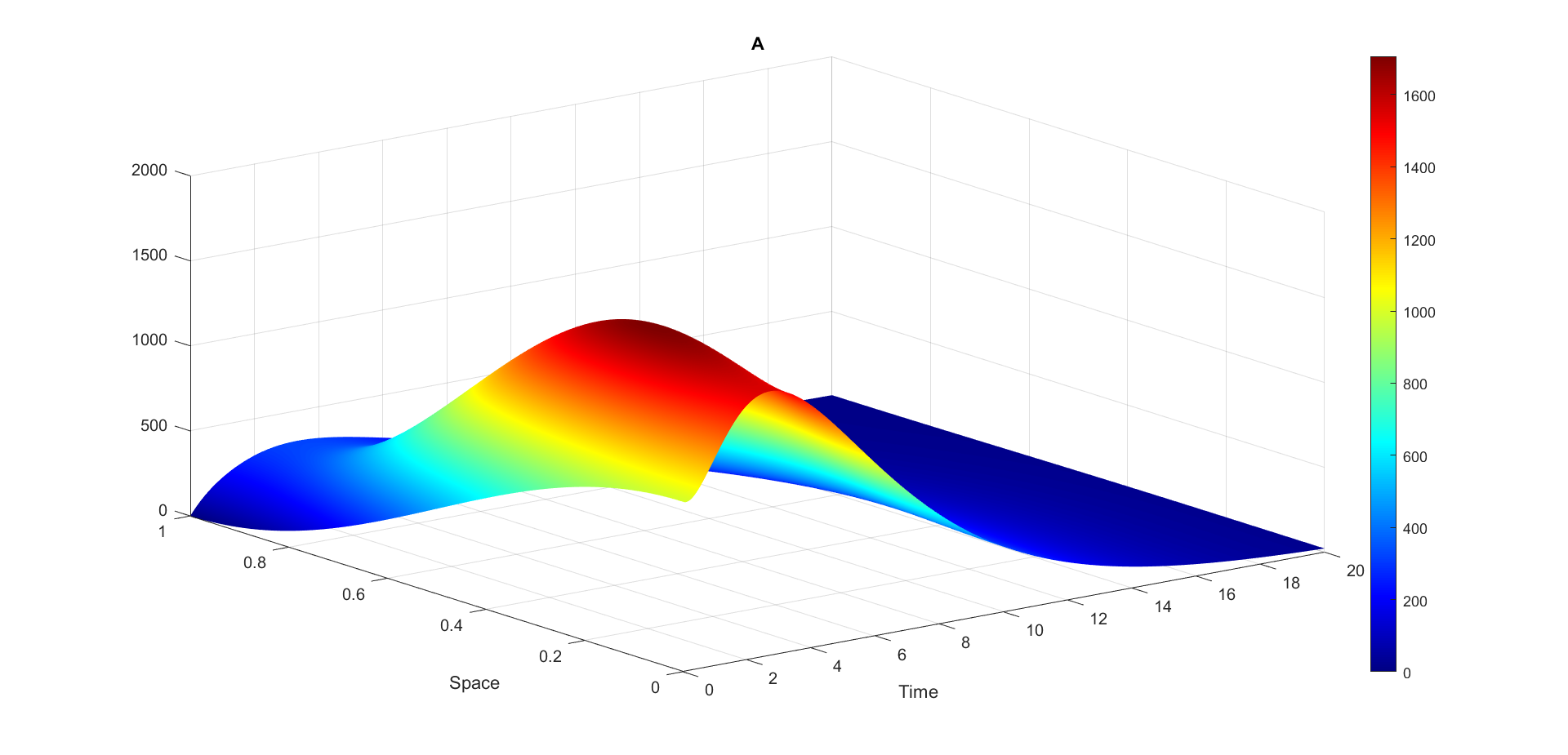}
		\put(62.3, 34.4){%
			\subfloat{%
				\begin{minipage}{0.5\textwidth}
					\includegraphics[width=0.5\textwidth]{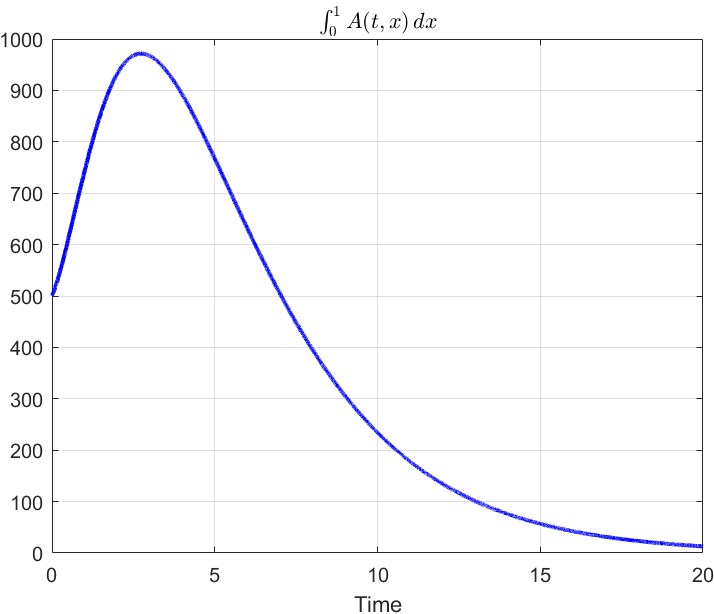}
				\end{minipage}
			}
		}
	\end{overpic}
	\caption{Changes in the Asymptomatic Group in the Presence of Controls.}
	\label{fig9}
\end{figure}
\begin{figure}[h!]
	\centering
	\begin{overpic}[width=\textwidth]{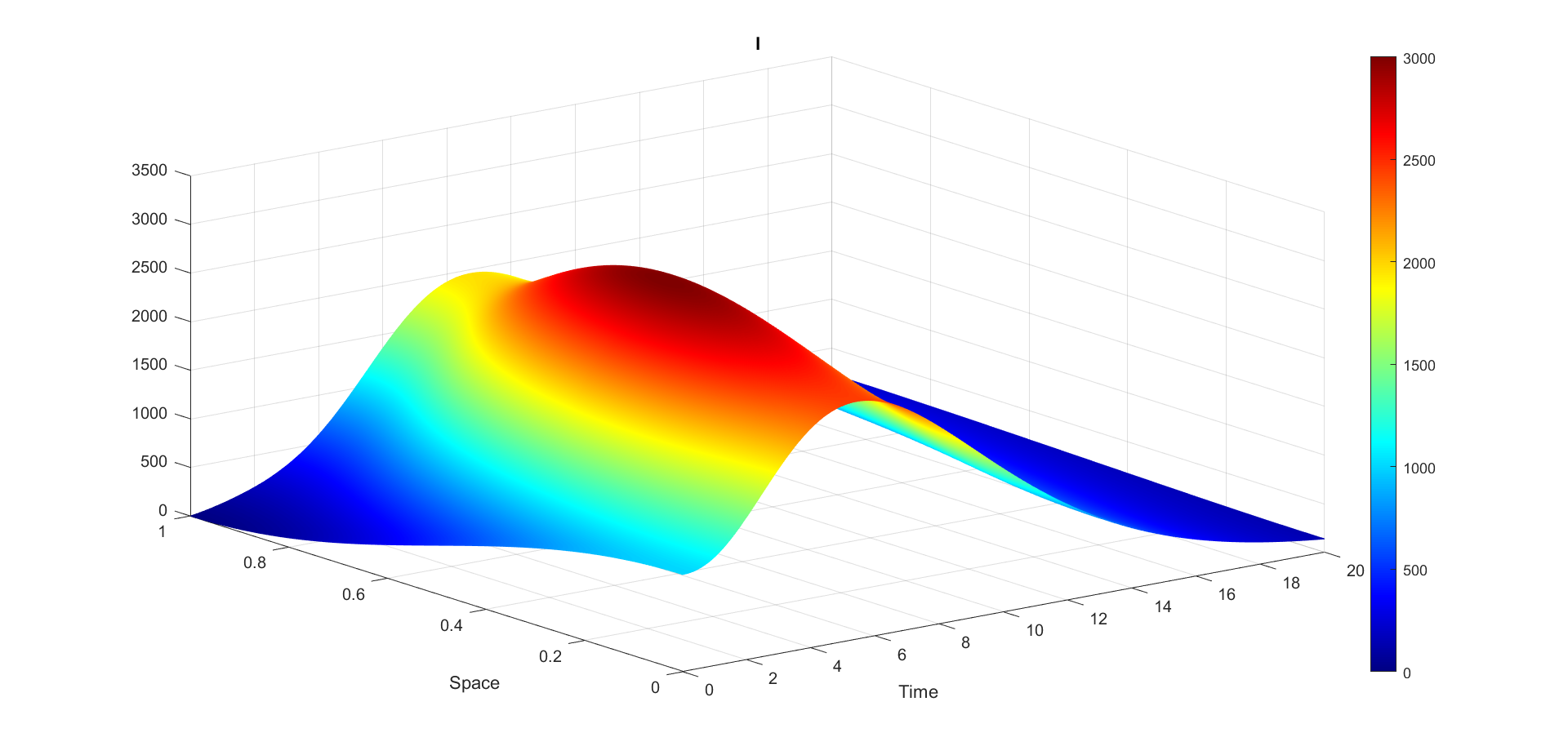}
		\vspace*{2cm}
		\put(62.3, 34.4){%
			\subfloat{%
				\begin{minipage}{0.5\textwidth}
					\includegraphics[width=0.5\textwidth]{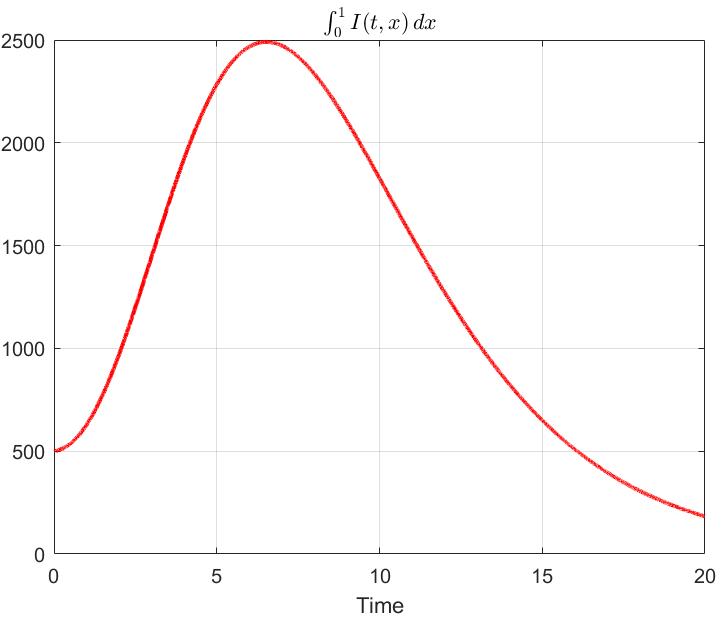}
				\end{minipage}
			}
		}
	\end{overpic}
	\caption{Changes in the Infected Group Without Control Actions.}
	\label{fig10}
\end{figure}
In Figure \ref{fig10}, we observe a substantial increase in the number of infected individuals in the absence of controls, with the infected population rising from 500 to over 2500 cases within approximately 15 days. This rapid escalation is due to the influx from both exposed and asymptomatic groups, significantly contributing to the overall number of active infections. The lack of control measures allows for unchecked transmission, enabling the virus to spread swiftly through the population. As the number of infected individuals grows, the healthcare access system becomes increasingly burdened, leading to higher morbidity and mortality rates.
\begin{figure}[h!]
	\centering
	\begin{overpic}[width=\textwidth]{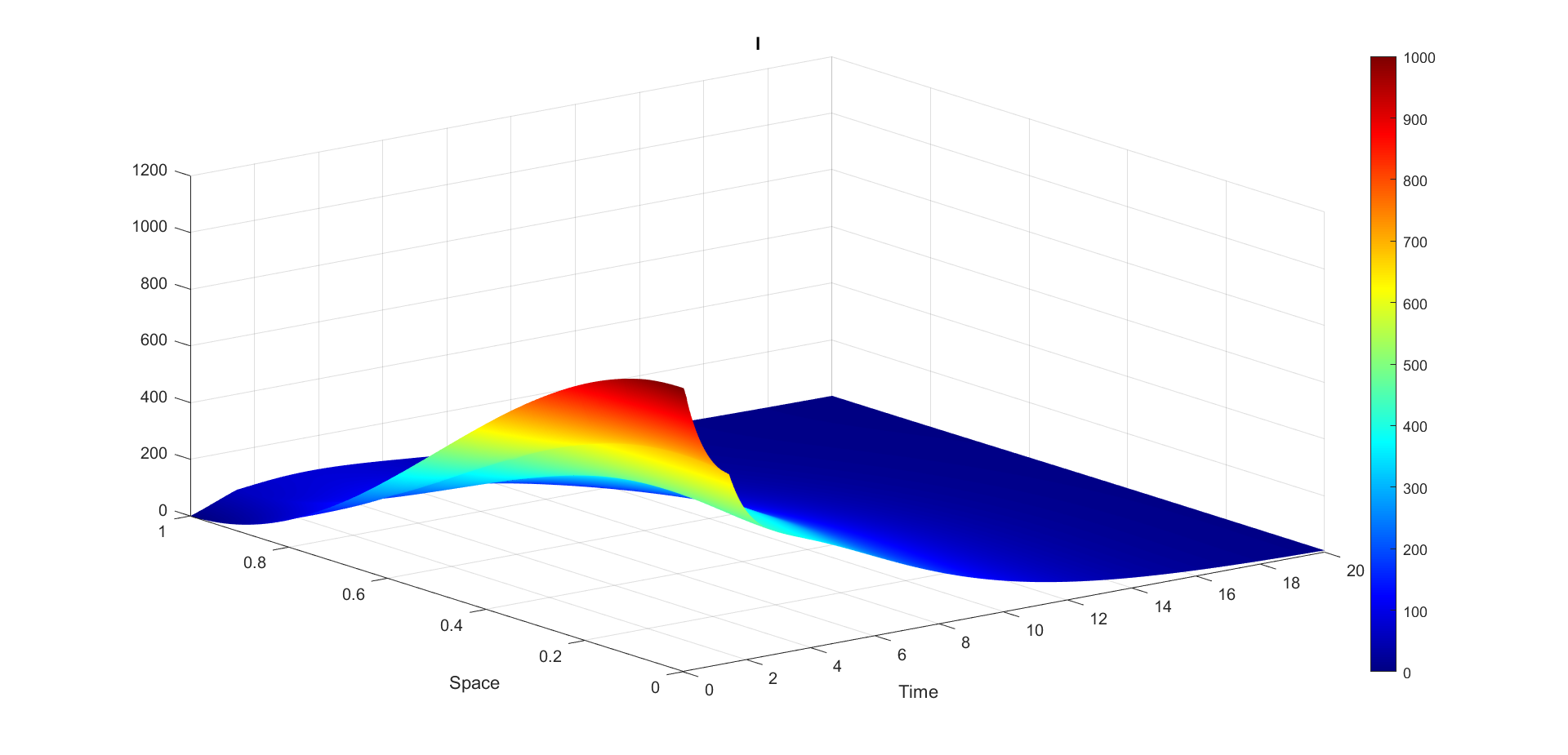}
		\vspace*{2cm}
		\put(62.3, 34.4){%
			\subfloat{%
				\begin{minipage}{0.5\textwidth}
					\includegraphics[width=0.5\textwidth]{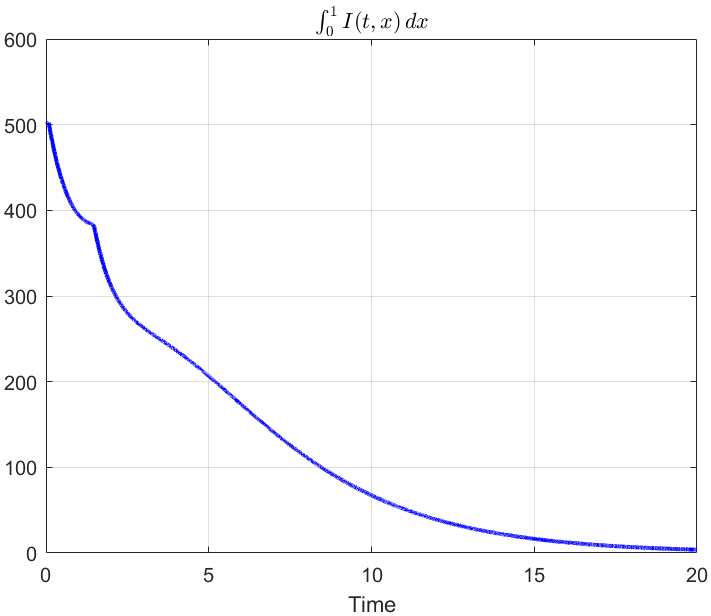}
				\end{minipage}
			}
		}
	\end{overpic}
	\caption{Changes in the Infected Group in the Presence of Controls.}
	\label{fig11}
\end{figure}
In contrast, when control measures are implemented, a markedly different trend is observed. As shown in Figure \ref{fig11}, the number of infected individuals starts decreasing from the first day of intervention, reaching zero within 20 days. This rapid reduction underscores the effectiveness of control measures in aiding recovery.
\begin{figure}[h!]
	\centering
	\includegraphics[scale=0.28]{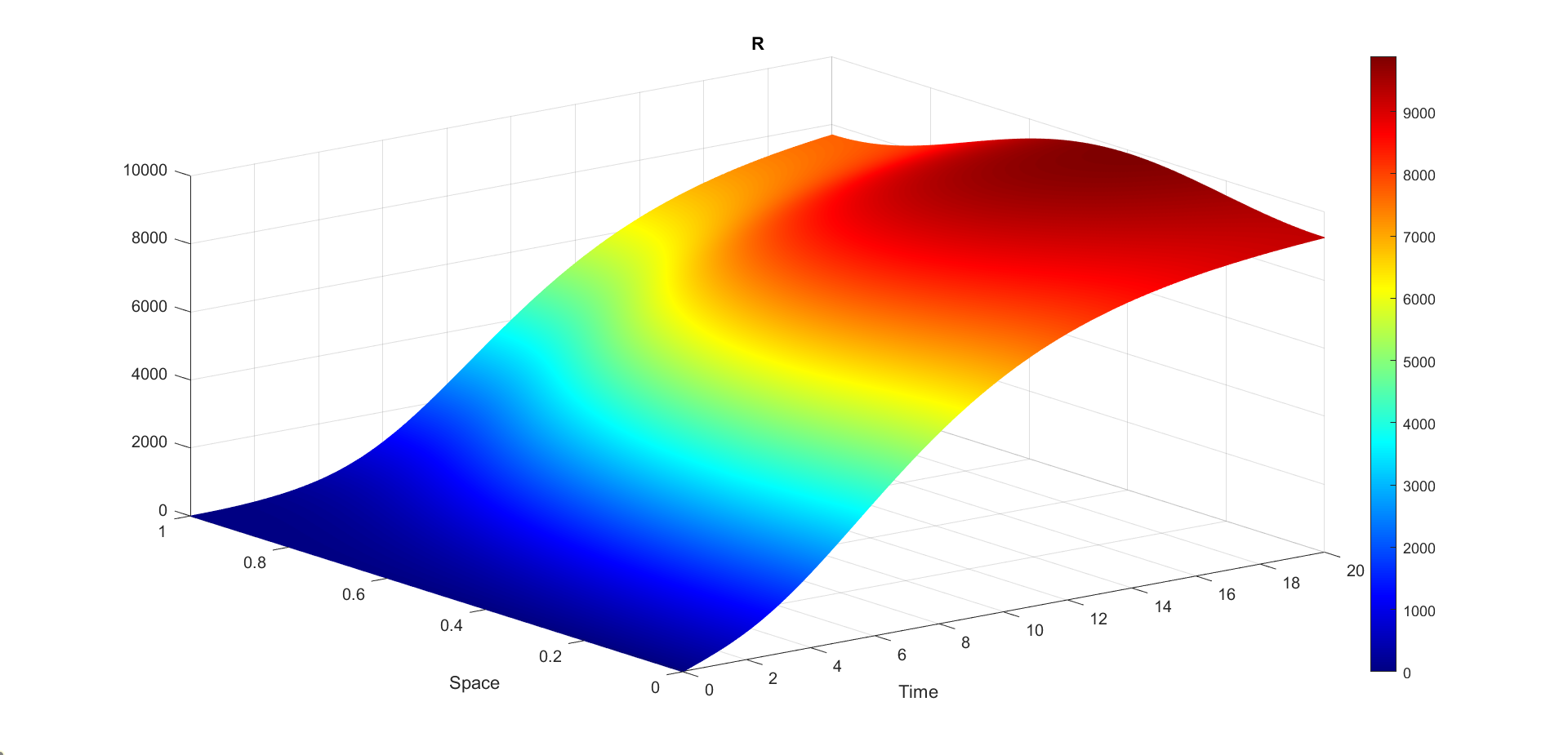}
	\caption{Changes in the Recovered Group Without Control Actions.}
	\label{fig12}
\end{figure}
\begin{figure}[h!]
	\centering
	\includegraphics[scale=0.28]{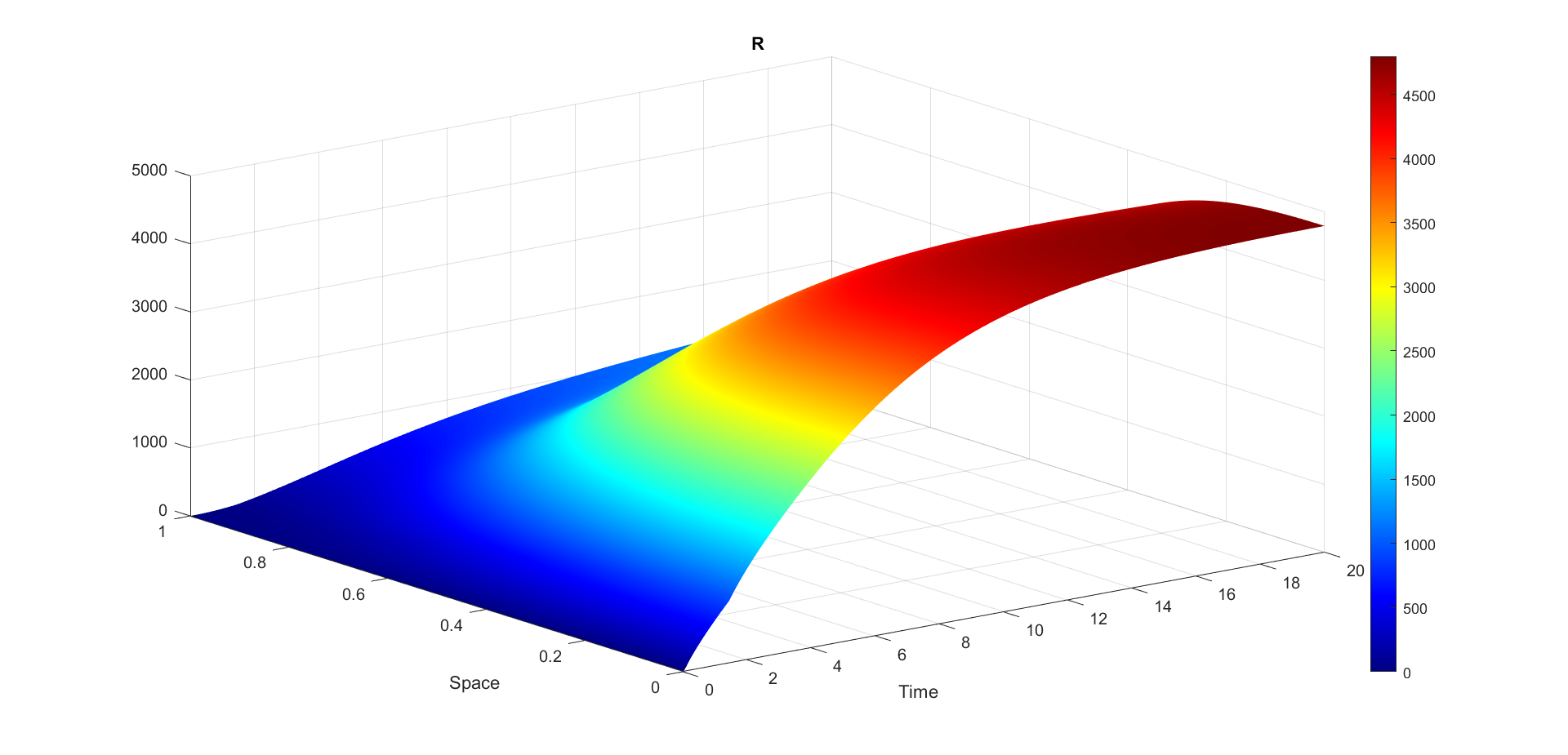}
	\caption{Changes in the Recovered Group in the Presence of Controls.}
	\label{fig13}
\end{figure}
Figure \ref{fig12} shows the evolution of the recovered population in the absence of control measures. The graph shows a significant number of individuals, more than 9100 people, recovering from the virus. This high number of recoveries suggests that almost the entire population will become infected at some point, as the virus spreads unchecked throughout the community. The large number of recoveries reflects the widespread transmission and eventual resolution of infections without intervention. In contrast, figure \ref{fig13} shows the effect of control measures on the recovery rate. With controls in place, the number of recoveries does not exceed 3000. This reduction is largely due to the implementation of quarantine measures, which effectively limit the spread of the virus and thus reduce the total number of infections. Figure \ref{fig5} shows that more than 4300 individuals are quarantined under these measures.

\begin{figure}[h!]
	\centering
	\includegraphics[scale=0.28]{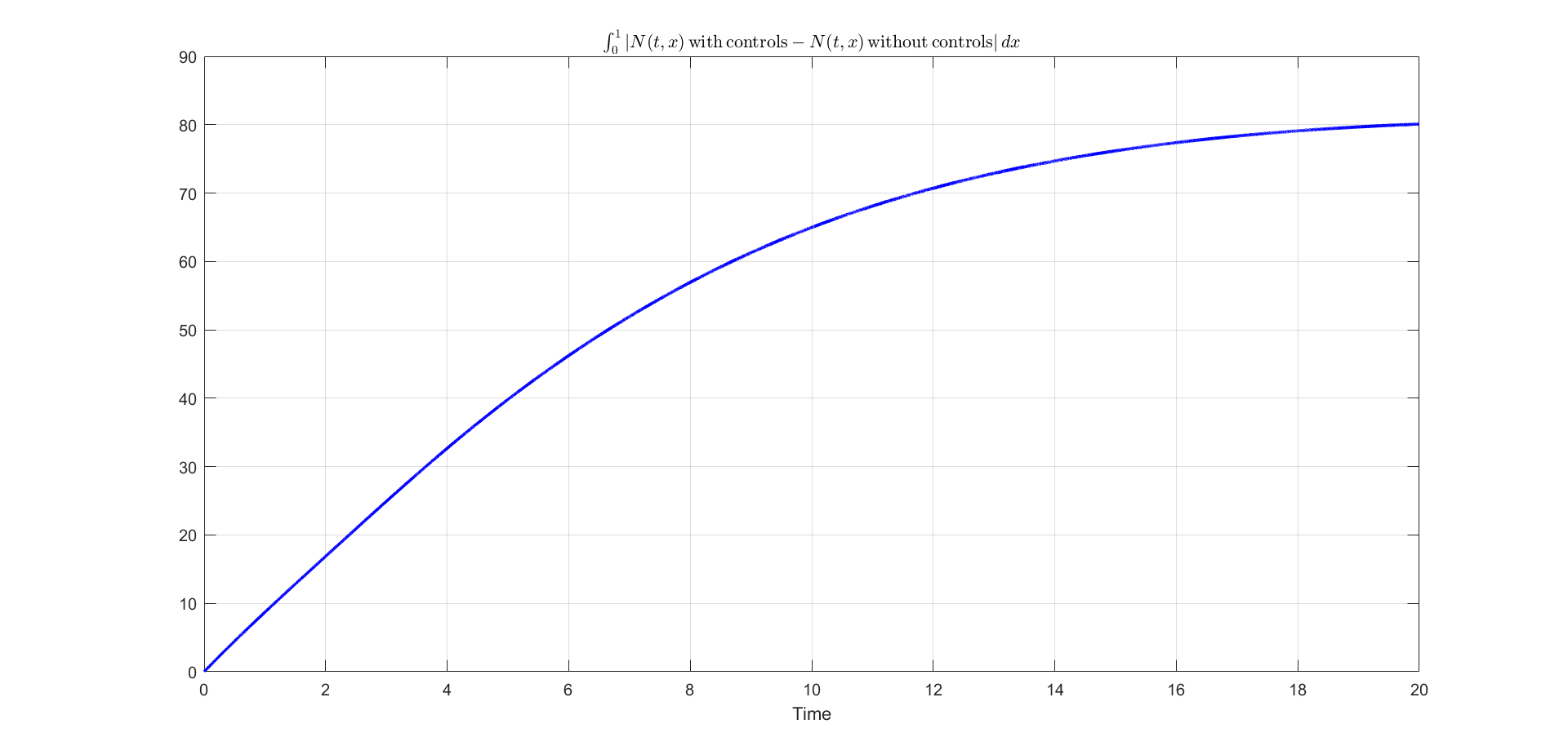}
	\caption{The absolute difference between the total populations of people with and without controls.}
	\label{fig14}
\end{figure}
\begin{figure}[h!]
	\centering
	\includegraphics[scale=0.28]{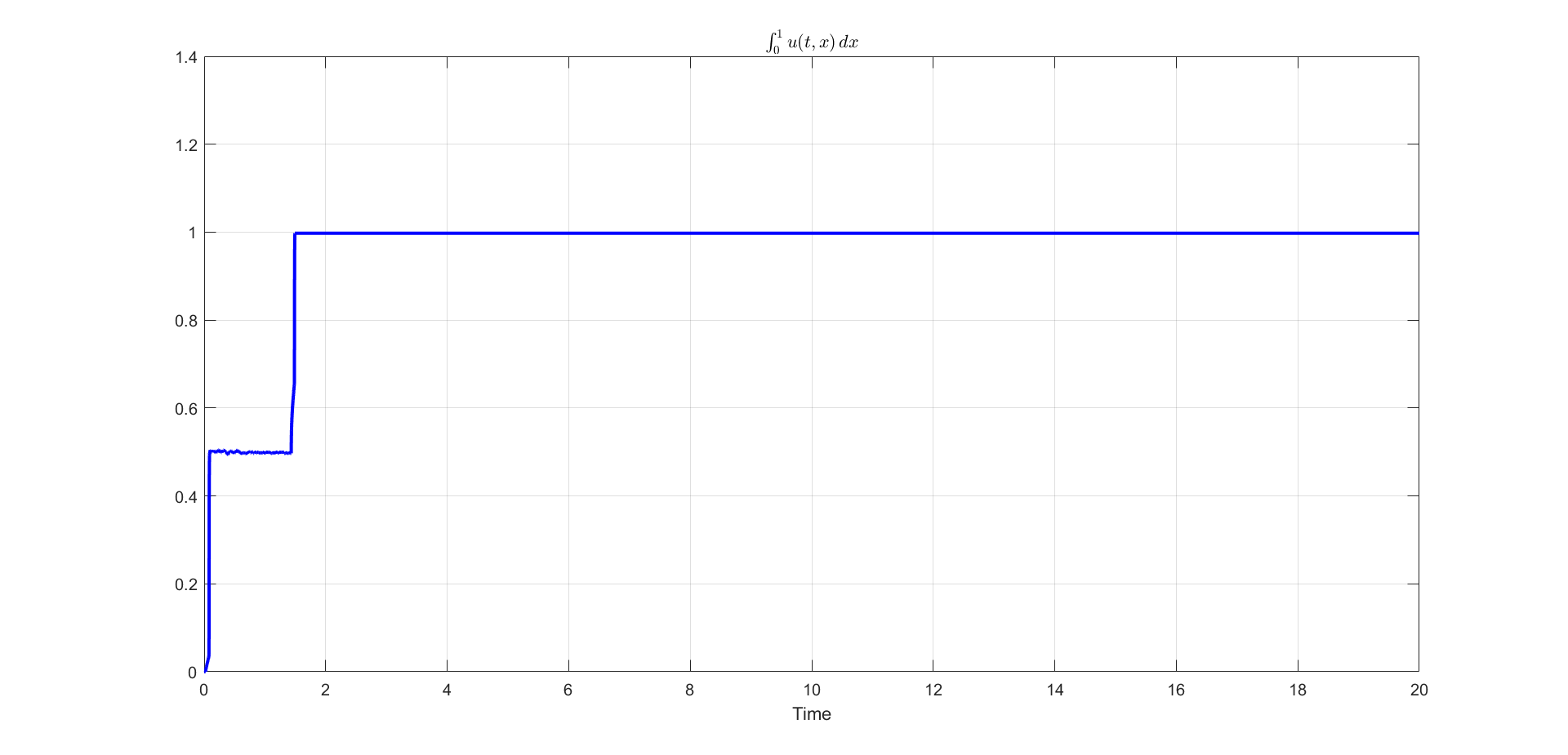}
	\caption{The evolution of treatment control function over time.}
	\label{fig15}
\end{figure}
\begin{figure}[h!]
	\centering
	\includegraphics[scale=0.28]{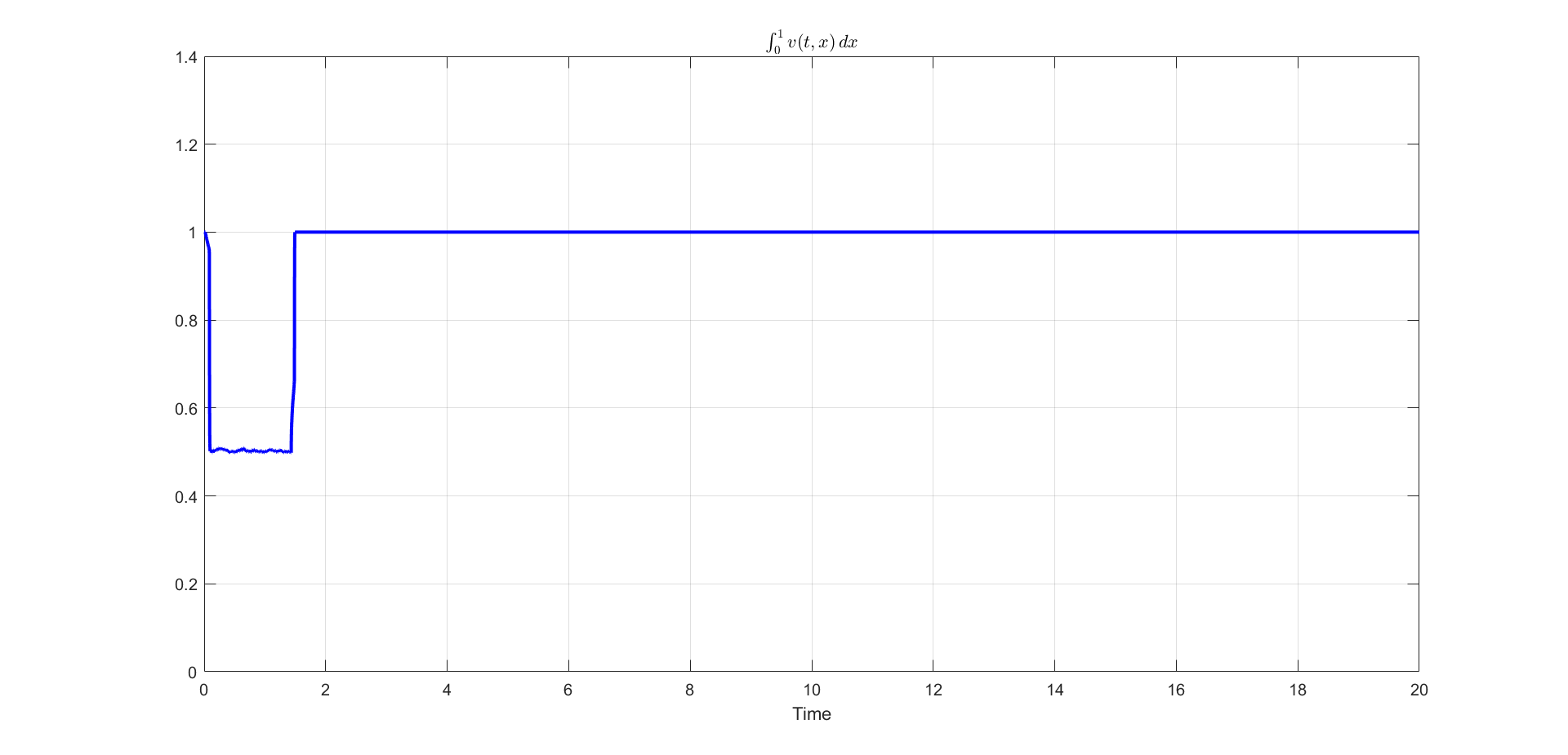}
	\caption{The evolution of quarantine control function over time.}
	\label{fig16}
\end{figure}
Figure \ref{fig14} shows the absolute total population of people with and without control measures. This demonstrates that control measures allow us to save more than 80 people from death due to the infection, which is particularly important for a population of no more than 8000 people.

\subsection{Discussion and Implications} \label{section 7}
\noindent

The analysis of the figures provides valuable insights into the dynamics of the COIVD-19 epidemic under different scenarios. In the absence of control measures, the susceptible population declines rapidly, reaching zero within a week, as illustrated in Figure \ref{fig2}. This results in a significant transition into the exposed category, leading to a surge in asymptomatic and infected cases. The introduction of control measures has a marked effect on the situation. Figure \ref{fig3} illustrates that the isolation of a proportion of the susceptible population effectively reduces the direct transition to the exposed group, thereby mitigating the overall impact. In the absence of controls, Figure \ref{fig6} illustrates a marked increase in the number of exposed individuals, reaching a peak of over 2500 within ten days. In contrast, the implementation of control measures (Figure \ref{fig7}) has resulted in a significantly lower peak of exposed individuals, with a maximum of 1000 cases. This indicates the effectiveness of indirect controls in limiting exposure. The growth of asymptomatic individuals is particularly concerning in the absence of control measures, as illustrated in Figure \ref{fig8}, with numbers reaching approximately 3000. This uncontrolled spread poses a significant risk due to asymptomatic transmission. Control measures, as depicted in Figure \ref{fig9}, significantly reduce this number to below 1000, demonstrating their efficacy in curbing the spread, despite the natural progression of existing exposed individuals. Those infected perceive a stark contrast between the two scenarios. In the absence of control measures, Figure \ref{fig10} illustrates a dramatic increase in the number of infected individuals, reaching over 2500 within 15 days and placing significant strain on healthcare Access systems. In contrast, the implementation of control measures results in a rapid decrease in infections, reaching zero within 20 days. This highlights the critical role of early intervention in the context of the pandemic. Furthermore, the recovery trends serve to reinforce the impact of the implemented controls. Figure \ref{fig12} illustrates a high recovery count of approximately 9100 individuals in the absence of controls, which reflects a widespread infection. However, the implementation of control measures, as illustrated in Figure \ref{fig13}, has resulted in a reduction in the number of recoveries to approximately 3000. This is attributed to the decreased infection rates, which have been supported by the quarantining of over 4300 individuals (Figure \ref{fig5}). The figures collectively demonstrate the profound effectiveness of control measures in managing and mitigating the spread of the COIVD-19 epidemic. Consequently, as the number of people who can be quarantined against infection increases and infected individuals are treated, more lives can be saved.
\section{Conclusion}
\noindent

In this paper, we have considered a spatiotemporal SEIAR epidemic model that includes a new group called the quarantined population (Q), referred to as the SQEIAR epidemic model. Our primary objective was to mitigate the spread of the infection by reducing the numbers of exposed, asymptomatic, and infected individuals, while implementing quarantine measures for the susceptible population through the application of optimal control theory. Our strategy employs two distinct control actions: the first involves imposing quarantine measures on susceptible individuals to prevent further spread of the disease; the second focuses on providing treatment to infected individuals to reduce the overall infection rate. The dynamics of our model are described by a set of Partial Differential Equations (PDEs). We have rigorously investigated the mathematical properties of our model, including the existence, boundedness, and positivity of the solutions.  Furthermore, we have established the existence of an optimal solution, characterized by an optimal control strategy, by minimizing a specifically defined cost functional. To validate our theoretical findings, we conducted numerical simulations, applying our model to the real-world scenario of the COVID-19 pandemic. These simulations demonstrate the practical viability and effectiveness of our approach, highlighting the potential for significant reductions in infection rates through the implementation of our proposed control strategies. 

\vspace{0.2cm}
\noindent
\textbf{Future Direction:} A key aspect of our model lies in the control action \( v(t,x) \in \left[ 0, \frac{1}{n} \right] \), which represents the quarantine applied to susceptible individuals. It is clear that as the number of regions \( n \) increases, the intensity of the quarantine measure in each region decreases. The decrease in \( v(t,x) \) as \( n \) increases can be interpreted as a shift from a more concentrated quarantine strategy (fewer regions, higher quarantine intensity) to a more distributed strategy (more regions, lower quarantine intensity), could be a way to minimize the economic and social impact of quarantine by distributing the control more evenly across the space while still attempting to control the spread of the disease. However, determining the optimal value of \( n \) remains crucial. This, of course, will depend on many factors such as population density, geographical characteristics, healthcare Access infrastructure, transmission rate, economic and social costs, and more. Future work could focus on investigating \emph{the ideal number} $n$ of regions needed to maximize the effectiveness of quarantine measures while minimizing the associated social, economic, and logistical costs. This optimization would balance disease transmission control with practical constraints, providing a more refined strategy for epidemic management.

\section*{Acknowledgments}
The authors are appreciative to the anonymous referees for carefully reading the work and providing insightful feedback.
\section*{Disclosure statement}
This work is free from any conflicts of interest.
\section*{Funding}
The authors state that they have no known financial interests or personal relationships that could have influenced the work presented in this paper.

\end{document}